\documentclass[a4paper,twoside,12pt, fleqn]{article}
\usepackage[fleqn]{amsmath}
\usepackage{amsfonts}
\usepackage{latexsym,amsthm, amssymb}
\usepackage[latin1]{inputenc}
\usepackage{enumerate}
\usepackage{longtable}
\pagenumbering{arabic}
\numberwithin{equation}{section}
\newtheorem{lemma}{Lemma}
\newtheorem{thm}{Theorem}

\date{}

\author {\small T.~Royen\\[-0.8ex]
\small Fachhochschule Bingen, University of Applied Sciences,\\[-0.8ex]
\small Berlinstra\ss e 109, D-55411 Bingen, Germany\\[-0.8ex]
\small  E-mail: \textsf{~thomas.royen@t-online.de}}

\title{\normalsize THE DISTRIBUTION OF THE SQUARE SUM OF DIRICHLET RANDOM VARIABLES AND A TABLE WITH QUANTILES OF GREENWOOD'S STATISTIC}

\begin{document}
 \maketitle

 \begin{abstract}
The exact distribution of the square sum of Dirichlet random variables is given by two different univariate integral representations. Alternatively, three representations by orthogonal series with Jacobi or Legendre polynomials are derived. As a special case the distribution of the square sum $U_n^2$ of spacings - also called Greenwood's statistic - is obtained. Nine quantiles of $nU_n^2-1$   are tabulated with eight digits for $n$ from 10 to 100.\\[0.5cm]
\emph {Key words and phrases:}  Greenwood's statistic, square sum of spacings, Dirichlet distribution, distribution of the sample coefficient of variation from gamma random variables, exact distributions, tabulated quantiles, goodness of fit tests, tests for exponentiality, tests for gamma distributions.\\[0.5cm]
\emph {MSC 2010 subject classifications:} 62E15, 62H10
\end{abstract}

 \section{Introduction and Notation}
Greenwood's statistic is defined by

\begin{displaymath}
U^2_{k+1}= \sum^{k+1}_{i=1}\left( U_{i:k}- U_{i-1 : k}\right)^2
\end{displaymath}
where the $U_{i : k}$  denote the successive ordered values of a random sample $U_1, \ldots, U_k$  from a uniform distribution on $(0,1)$  with $U_{0 : k}= 0$ and $ U_{k+1 : k}=1$. If $F_0$ denotes any completely specified continuous cdf then, with a random sample $X_1, \ldots X_k$ and   $U_i=F_0 (X_i)$, the statistic $U_{k+1}^ 2$  can be used for a test of the hypothesis that the unknown cdf $F$ of $X$ coincides with $F_0$. For the efficiency of this test see e.g. Pyke (1965).
For the application of Greenwood's statistic by a test for exponentiality see e.g. Subhash and Gupta (1988).\\

\par The computation of the distribution of $U_n^2$  has a long history beginning with Moran (1947)
 and (1951),
 Gardner (1952)
  followed by Hill (1979),
 Burrows (1979),
  Currie (1981),
  Stephens (1981)
 Currie and Stephens (1986),
  Does et al. (1988),
  Ghosh and Jammalamadaka (1998)
  and (2000).
  The exact methods of Burrows, Currie and Stephens use iterated integrations up to $n = 20$. The subsequent papers are concerned with approximations mainly by Edgeworth expansions and saddle-point methods. It is well known that the distribution of the standardized $U_n^2$ converges very slowly to a normal distribution. For the asymptotic normality of a more general class of statistics, based on spacings, see Darling (1953).
   A comparison of the approximate quantiles with the available ``exact'' quantiles for small sample sizes e.g. in Ghosh and Jammalamadaka (2000)
  shows rather unsatisfying discrepancies. However, thanks to the increasing computing power of computer algebra systems, computations are available based on exact representations of the cdf of Greenwood's statistic. Thus errors are reduced solely to those of the numerical evaluation.\\

\par In this paper more generally some exact representations are given for the distribution of the square sum $U_n^2$ of Dirichlet random variables derived from gamma distributed variables of order $\alpha$. This statistic can be used as a competitor with the maximum likelihood statistic for the test of a specified value $\alpha_0$  of the unknown order parameter $\alpha$  of a gamma distribution with any unknown scale parameter. For the maximum likelihood estimation of $\alpha$  see  Provost (1988)
 and chapter 17 in Kotz et al. (1994).
  Greenwood's distribution is obtained by the special case  $\alpha= 1$. In section 4 the cdf of $U_n$  is represented by orthogonal series with Jacobi or Legendre polynomials. In section 5 two different univariate integral representations for the cdf of $U_n^2$  are found, derived by the Fourier and Laplace inversion formula. Theorem 2, the main result, provides also the more general case with gamma random variables of different orders $\alpha_i$  by a univariate integral over a product of parabolic cylinder functions. Theoretically, the integral representations look more elegant since no special coefficients have to be computed before. For the orthogonal series a sequence of moments must be computed, but the computing effort can be reduced to two linear recursion formulas. The eight digit quantiles of Greenwood's statistic for $n$ from 10 to 100 in section 6 were computed by the orthogonal series in (\ref{eq_4.6}) and again by (\ref{eq_4.7}) . In spite of the very high computing accuracy, required to achieve the eight digit quantile values, this method works more quickly than the integral representations in section 5.

\par Now let $X_1, \ldots, X_n$  be i.i.d. random variables with a gamma pdf
\begin{equation}    \label{eq_1.1}
     g_a (x) = \left( \Gamma (\alpha)\right)^{-1} x^{a - 1} e^{- x} \  \textrm{and} \  G_a (x) = \int_0^x g_a (\xi) d \xi \, , \,  \alpha > 0 \, , \ x > 0 \, .
\end{equation}
The ``sample triangle'' in $\mathbb{R}^n$  generated by $X_1, \ldots, X_n$ , has the vertices $(0, \ldots , 0), (X_1, \ldots , X_n), \ ( \bar{X}, \ldots , \bar{X})$,
with the sample mean $\bar{X}$ and the angle $\Phi$  between the vectors ($X_1, \ldots, X_n$ ) and  $(1, \ldots, 1)$. We use the further
notations
\begin{eqnarray}
       Y &=& \sum_{i=1}^n X_i = n \bar{X} \, , \quad  \ Z=R^2= \sum_{i=1}^n X_i^2 \, , \nonumber \\
       U &=& \frac{R}{Y}= \frac{1}{\sqrt{n} \cos \Phi}\, , \ \ Q=R^2 - n \bar{X}^2 = n S^2 \, . \label{eq_1.2}
\end{eqnarray}
The random variables $Y_i = X_i /Y , \  i = 1\, , \ldots \, , n-1$  have the joint Dirichlet density
\begin{displaymath}
       \frac{\Gamma (\alpha n)}{(\Gamma (\alpha))^n} \left( 1 - \textstyle{\sum_{i=1}^{n-1}} y_i \right)^{\alpha - 1} \textstyle{\prod_{i=1}^{n-1}}y_i^{\alpha-1} \, .
\end{displaymath}
Thus, the distribution of the square sum \mbox{$U_n^2 = \sum_{i=1}^n Y_i^2  \, ,  \ Y_1 + \ldots + Y_n =1$}  is closely related to the distribution of the squared sample coefficient of variation
\begin{equation}
         S^2 /\bar{X}^2= \tan^2 (\Phi) = n U_n^2 -1 \, . \label{eq_1.3}
\end{equation}
By Laha (1954)
 and Lukacs (1955)
  the independence of $\bar{X}$  and $S/\bar{X}$ was shown to be a characterization of the gamma distribution. We shall use the independence of $U_n$ and $Y=n \bar{X}$.

 \par Apart from some special notations as e.g. in (\ref{eq_1.1}) the cdf, pdf, char. function (cf) and the Laplace transform (Lt) of any continuous random variable $Z$ will be denoted by $F_Z\, , \ f_Z \, ,  \ \hat{f}_Z$  and $f_Z^\ast$  respectively.   $E_Z (\ldots )$ means expectation with regard to $Z$. The difference operator is denoted by $\Delta$, i.e. $\Delta a_k = a_{k+1}- a_k$  for any sequence $(a_k)$ and $(c)_k = \prod_{i=0}^{k-1}(c+i)$.  Formulas from Abramowitz and Stegun are denoted by A.S. and their number.

 \section{The distribution of $R$ and $Q$}
 These distributions were already given in Royen (2007a),
  (2007b).
  The coefficients, needed for a series representation of the cdf $F_R$ of $R$, are also used in the subsequent sections. Therefore, this representation is derived here more concisely.
\par For a given power series $f(z) = \textstyle{\sum_{k=0}^\infty} a_k z^k , a_0 >0$,  the coefficients $a_{n,k}$ of $\left(f (z)\right)^n$  can be computed by two steps: Setting
\begin{displaymath}
       \log \left(f (z)\right) = \log\left(a_0\right)+ \textstyle{\sum_{k=1}^\infty} b_k z^k / k\, ,
\end{displaymath}
the relation
\begin{eqnarray}{}
          f' (z)&=& \textstyle{\sum_{k= 0}^\infty} (k+1)a_{k+1}z^k =
          f(z) \textstyle{\sum_{k=0}^\infty} b_{k+1} z^k \nonumber\\
          &=& \textstyle{\sum_{k=0}^\infty} \left( \textstyle{\sum_{j= 0}^k} a_j b_{k+1-j}\right)z^k \ \mbox{ implies}\nonumber\\
      b_{k+1}&=& a_0^{-1}\left((k+1)a_{k+1}- \textstyle{\sum_{j=1}^k} a_j b_{k+1-j}\right)\, , k \in \mathbb{N}_0 \, , \ \mbox{and}\nonumber\\
a_{n,0}&=& a_0^n \, , \ \ a_{n, k+1}= \textstyle{\frac{n}{k+1} \sum_{j=0}^k} a_{n,j}b_{k+1-j}\, ,  k \in \mathbb{N}_0 \ .      \label{eq_2.1}
\end{eqnarray}

 Let be $C=\cos (\Phi)$. The moments
\begin{equation}
       \gamma_{\alpha n +k}:= E \left( C^{\alpha n+k} \right)     \label{eq_2.2}
\end{equation}
depend on $\alpha$ and $n$ not only by $\alpha n$, but we write $\gamma_{\alpha n+k}$ for simplicity instead more precisely $\gamma_{\alpha ,n, \alpha n+k}$.
With $U^{-1} = \sqrt{n}C$ it follows from the independence of $U$ and $\bar{X}$ that
\begin{eqnarray}
F_R (r) & = &  E_U \left( G_{\alpha n} (r U^{-1}) \right)\nonumber \\
& = & E_C \left( G_{\alpha n} (r C\sqrt{n}) \right) = \frac{(r \sqrt{n})^{\alpha n}}{\Gamma (\alpha n)} \sum_{k= 0}^\infty \frac{\gamma_{\alpha n+k}}{\alpha n +k} \frac{(-r \sqrt{n})^k}{k!}\, .     \label{eq_2.3}
\end{eqnarray}
On the other hand we have with
 \begin{eqnarray}
        a_{\alpha, k} &=& \frac{
        \Gamma \left( (\alpha+k)/2\right)}{2 \Gamma (\alpha) k!}\ \mbox{the Lt} \nonumber \\[.8ex]
        f_Z^\ast (t) &=& \left( f_{X^2}^\ast (t)\right)^n = \left( t^{-\alpha/2} \textstyle{\sum_{k=0}^\infty} (-1)^k a_{\alpha , k} t^{- k/2}\right)^n \nonumber \\[1.5ex]
        &=& t^{-\alpha n/2} \textstyle{\sum_{k=0}^\infty} (-1)^k a_{\alpha , n,k}  t^{- k/2} \ ,     \label{eq_2.4}
 \end{eqnarray}
where the $a_{\alpha , n,k}$ are computed from the $a_{\alpha ,k}$ by the above procedure leading to (\ref{eq_2.1}). With $Z = R^2$ we obtain from (\ref{eq_2.4}) by inversion, followed by integration
\begin{equation}
       F_R (r) = r^{\alpha n} \sum_{k=0}^\infty \frac{a_{\alpha,n,k}}{\Gamma \left( 1+ (\alpha n + k)/2\right)}(-r)^k \, ,     \label{eq_2.5}
\end{equation}
and by comparison with (\ref{eq_2.3})
\begin{equation}
       \gamma_{\alpha n + k}= \frac{2 \Gamma (\alpha n)k!}{\Gamma \left( (\alpha n+k)/2\right)n^{(\alpha n +k)/2 }} a_{\alpha,n,k}\ .       \label{eq_2.6}
 \end{equation}
Finally, from the density
\begin{eqnarray*}
       f_R (r) &=& E_\Phi \left( \sqrt{n}\cos \Phi \cdot g_{\alpha n} ( r \sqrt{n} \cos \Phi ) \right)\\
       &=& \sqrt{n}g_{\alpha n} (r \sqrt{n}) E_\Phi \left( (\cos \Phi)^{\alpha n} e^{r \sqrt{n} (1-\cos \Phi )} \right)\\
       &=& \sqrt{n} g_{an} (r\sqrt{n}) \sum_{k=0}^\infty E_\Phi \left( (\cos \Phi)^{\alpha n}(1-\cos \Phi)^k  \right)\frac{(r \sqrt{n})^k}{k!}\\
       &=& \sqrt{n} \sum_{k=0}^\infty \textstyle{\alpha n + k -1 \choose k} (-\Delta)^k \gamma_{\alpha n} \cdot g_{\alpha n + k}(r \sqrt{n})
\end{eqnarray*}
we obtain  $F_R$ as a convex combination of gamma distribution functions:
\begin{equation}
        F_R (r) = \sum_{k=0}^\infty p_{\alpha,n,k} G_{\alpha n +k} (r \sqrt{n}) \, , \ \ p_{\alpha,n,k}  = \textstyle{\alpha n + k -1 \choose k} (-\Delta)^k \gamma_{\alpha n} \, .      \label{eq_2.7}
\end{equation}
With gamma$(\alpha n+k)$-distributed random variables $Y_{\alpha n+k}$ and a random index $K$ with $P\{ K=k\} = p_{\alpha,n,k}$ this can be restated more concisely as a stochastic representation:
\begin{equation}
       R \ \textrm{is distributed as } n^{-1/2}   Y_{\alpha n + K} \ .     \label{eq_2.8}
\end{equation}
   The distribution of $Q=nS^2$ could be derived from the distribution of  $U_n^2$ , but we have directly with
\begin{displaymath}
       K_\beta (x,y) = \sum_{k=0}^\infty \frac{He_k (y)}{\Gamma \left( \beta +  k/2 \right)} \frac{(-x)^k}{k!} \, ,
\end{displaymath}
where $He_k$ denotes the Hermite polynomials (A.S.22.2.15), and the parabolic cylinder functions $D_{-\alpha}$  the integral representation
 \begin{equation}
       \!\!\!\!\!\!\!\!\!\!\!\! F_Q(x) = \frac{1}{\sqrt{2 \pi}}\left( \frac{x}{2} \right)^{\alpha n/2}\!\!\!  \int_{- \infty}^{\infty}   K_{\alpha n/2+1} \textstyle( {\sqrt{nx/2}, y} ) \! \left(  D_{-\alpha} (- y/\sqrt{n})\right)^n e^{-y^2/4} dy \, ,                                  \label{eq_2.9}
\end{equation}
which was given in Royen (2007b)
 with slightly different notations. Further representations of $F_Q$ are found in Royen (2007a).
\section{Some Laplace transforms}
If $X$ has a gamma density $g_\alpha$, then the bivariate Lt of $(X, X^2)$ is given by
\begin{eqnarray}
       E \left( e^{-sX-tX^2}\right)  &=& \frac{1}{t^{\alpha/2}\Gamma (\alpha)} \int_0^\infty x^{\alpha-1} e^{-x^2} e^{-x(1+s)/\sqrt{t}}dx  \nonumber \\
       &=& \frac{1} {2t^{\alpha/2}\Gamma (\alpha)} \sum_{k=0}^\infty
       \frac{\Gamma
       \left( (\alpha + k)/2\right)}{k!}
       \left( - \frac{1+s}{\sqrt{t}}  \right)^k  \nonumber     \\
       &=& \frac{1}{(2t)^{\alpha/2}}\exp
       \left(  \frac{(1+s)^2}{8t}  \right)D_{-\alpha} \left( \frac{1+s}{\sqrt{2t}}  \right)   \label{eq_3.1}
\end{eqnarray}
with the parabolic cylinder function $D_{-\alpha}$ from (A.S.19.3.1). If $\alpha \in \mathbb{N}$  then
 \begin{equation}
        \!\!\!\!\!\! 2^{-\alpha/2}\exp
        \left( \frac{z^2}{8}    \right)
        \!\! D_{-\alpha} \!\! \left( \frac{z}{\sqrt{2}}   \right) =
        {\frac{\sqrt{\pi}}{2 \Gamma (\alpha)}}
        \left( - \frac{d}{dz}\right)^{\alpha-1}
        \!\! \left( \exp   \left( \frac{z^2}{4} \right)  er\!f\!c
        \left(  \frac{z}{2} \right) \right)
        \label{eq_3.2}
 \end{equation}
and in particular
\begin{displaymath}
\exp \left(  \frac{z^2}{4}\right) D_{-1}(z) = \sqrt{\frac{\pi}{2}} \exp \left(  \frac{z^2}{2}\right) er\!f\!c  \left(  \frac{z}{\sqrt{2}} \right) \, .
\end{displaymath}
>From (\ref{eq_1.2}), (\ref{eq_2.2}), (\ref{eq_2.7}) we obtain
\begin{eqnarray*}
       F_R(r) &=& P\{ UY \leq r \}= \int_0^{r\sqrt{n}} F_U (ry^{-1}) g_{\alpha n} (y) dy \\
       &=& \frac{1}{\Gamma (\alpha n)}\int_0^{r\sqrt{n}} \left( 1- F_C ( n^{-1/2} r^{-1} y ) \right) y^{\alpha n -1} e^{-y}dy  \\
       &=&\frac{(r\sqrt{n})^{\alpha n}}{\Gamma (\alpha n)}\int_0^1 (1 - F_C (x)) x^{\alpha n-1}e^{-xr\sqrt{n}} dx \, .
\end{eqnarray*}
Thus,
\begin{equation}
       \Gamma (\alpha n) s^{-\alpha n}F_R \left( n^{-1/2} s \right) =
       \Gamma (\alpha n) s^{-\alpha n} \sum_{k=0}^\infty p_{\alpha , n , k} G_{\alpha n + k}(s)
       \label{eq_3.3}
\end{equation}
is the Lt of     $x^{\alpha n-1} (1 - F_C (x)), 0 < x \leq1$. $(F_C(x) = 0 \ \ if  \  x \leq n^{-1/2})$.\\

\par Also the Lt of $C= \cos (\Phi)$ can be represented by means of the probabilities $p_{\alpha , n , k}$  from (\ref{eq_2.7}). If $Z_1$ denotes a  r.v. with density  $e^{-z}$, independent of $U$, and $X_{\alpha n -1}$  a  r.v. with a beta$(1, \alpha n -1)$-distribution, independent of $R$, then
 \begin{displaymath}
        \frac{E (X_{\alpha n -1}^k)}{E(Z_1^k)}= \frac{(\alpha n-1) B (k + 1 , \alpha n -1 )}{k!}=
        \frac{\Gamma (\alpha n)}{\Gamma (\alpha n + k)}= \frac{1}{(\alpha n)_k}\, .
\end{displaymath}
Comparing this with $E(R^k) = E(UY)^k = E(U^k) E(Y^k) = E(U^k) (\alpha n)_k$  it follows that $UZ_1$  and $RX_{\alpha n-1}$  have the same moments which determine the distribution completely. Thus
 \begin{equation}
 UZ_1 \textrm{ is distributed as }RX_{\alpha n -1}
 \label{eq_3.4}
 \end{equation}
and consequently the Lt $f_C^\ast$  of $C = \cos (\Phi)$  is given by
\begin{eqnarray}
       f_C^\ast(s) &=& E \left(e^{-sC} \right)
       = E \left(e^{-s/ (\sqrt{n} \cdot U)} \right) =  P \{ UZ_1 > sn^{-1/2}\} \nonumber \\
       &=& P \{ RX_{\alpha n-1} > sn^{-1/2}\}
       = P \{ X_{\alpha n-1}Y_{\alpha n+K}> s\} \nonumber \\[1mm]
       &=& \textstyle{\sum_{k=0}^\infty} p_{\alpha, n, k} \int_s ^\infty (1-sy^{-1})^{\alpha n-1}g_{\alpha n+k}(y)dy \nonumber \\[1mm]
       &=&\textstyle{ \sum_{k=0}^\infty} \left( (- \Delta)^k  \gamma_{\alpha n} \right) \frac{1}{k! \Gamma (\alpha n)} \int_s^\infty (y-s)^{\alpha n-1}y^k e^{-y}dy \nonumber \\[1mm]
       &=& e^{-s }\textstyle{\sum_{k=0}^\infty} \left( (- \Delta)^k \gamma_{\alpha n} \right) \frac{1}{k!}\int_0^\infty (y+s)^k g_{\alpha n} (y) dy \nonumber \\[1mm]
       &=& e^{-s} \textstyle{\sum_{k=0}^\infty} \left( (- \Delta)^k \gamma_{\alpha n} \right) \textstyle{\sum_{j=0}^k} {\alpha n +k-j-1 \choose k-j} s{^j}/{j!} \nonumber \\[1mm]
          &=& e^{-s} \textstyle{\left( 1 + \sum_{k=1}^\infty p_{\alpha, n, k} {\alpha n + k  - 1 \choose k}^{-1}
          \sum_{j=1}^k {\alpha n + k  -j - 1 \choose k - j} s^j/j! \right)} .  \label{eq_3.5}
\end{eqnarray}

>From the binomial series we obtain the mgf of  $\log (\sqrt{n}U)$ :
\begin{eqnarray}
       E (\sqrt{n}U)^s &=& E (\cos (\Phi))^{-s} = E \left( \frac{(\cos (\Phi))^{\alpha n}}{(1-(1-\cos (\Phi)))^{\alpha n+s}}  \right) \nonumber \\[1ex]
       &=& \textstyle{\sum_{k=0}^\infty}  ((-\Delta)^k \gamma_{\alpha n}) \frac{(\alpha n + s)_k}{k!}  = \textstyle{\sum_{k=0}^\infty}
       p_{\alpha, n, k} \frac{(\alpha n + s)_k}{(\alpha n)_k} \, , \label{eq_3.6}
       \end{eqnarray}
following also from (\ref{eq_2.8}) and $\hat{f}_{\log Y_{\alpha n + K}} = \hat{f}_{\log Y} \hat{f}_{\log (\sqrt{n}U)}$.\\

In particular for  $\alpha=1$ the ``missing moments''  $\gamma_m$, $m=1,\ldots n-1$,  are given by
\begin{equation}
       \gamma_m = \gamma_n + \textstyle{\sum_{k=1}^\infty} p_{1, n, k} \frac{(n-m)_k}{(n)_k} \ \textrm{ or }\
        \gamma_{m-1}  =   \textstyle{\sum_{k=0}^\infty} (-\Delta)^k \gamma_m \, ,  \  m=n, n-1, \ldots 2\, .                                  \label{eq_3.7}
\end{equation}
The inversion formula, derived from the Bernstein polynomials, (see e.g. Feller (1971)
 would already provide
\begin{equation}
       F_C(x) = \lim_{n \to \infty}  \textstyle{\sum_{k \leq nx}} {n \choose k} (-\Delta)^{n-k} \gamma_k \, ,\label{eq_3.8}
\end{equation}
but for a satisfying accuracy a very large $n$ would be required. Therefore, some series expansions with orthogonal polynomials will be given in the subsequent section.

\section{Some representations of the distribution of $U$ by orthogonal polynomials}

The square integrability of the densities $f_C$  and $f_U$  is equivalent and gua\-rantees the uniform convergence of the following orthogonal series. The condition (\ref{eq_4.4}) of the subsequent lemma is sufficient for the square integrability of $f_U$.
\ \\[-3ex]
\begin{lemma}  Let  $Y_1, \ldots Y_n$ be random variables with $\sum_{i=1}^n Y_i=1$  and a joint Dirichlet distribution with the parameters $\alpha_1, \ldots \alpha_n$.  Then the cdf $F_U$  of $U=  \left( \sum_{i=1}^n Y_i^2 \right)^{1/2}$ satisfies the following relations:
\end{lemma}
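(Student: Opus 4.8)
The overall strategy is to pass to the gamma representation of the Dirichlet law, read off every moment of $U$ from it, and then expand the density with respect to a Jacobi weight whose exponents are dictated by the behaviour of $f_U$ at the two endpoints of its support; the three series differ only in the choice of that weight, and all of their coefficients are finite linear combinations of the moments.

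First I would write $Y_i=X_i/Y$ with independent $X_i$ having density $g_{\alpha_i}$ and $Y=\sum_{i=1}^n X_i$ having density $g_{\alpha_1+\cdots+\alpha_n}$; for independent gammas with a common scale the normalised vector is Dirichlet and independent of its total, so $U$ is independent of $Y$ exactly as in the equal-parameter case used in Section~2. Since $R=(\sum_i X_i^2)^{1/2}=UY$, this independence gives $E(U^{2m})=E(R^{2m})/E(Y^{2m})$ with $E(Y^{2m})=(\alpha_1+\cdots+\alpha_n)_{2m}$; equivalently, the Dirichlet moment formula $E\big(\prod_i Y_i^{2k_i}\big)=\prod_i(\alpha_i)_{2k_i}/(\sum_j\alpha_j)_{2m}$ together with the multinomial theorem yields the moments of $U^2=\sum_i Y_i^2$ in closed form. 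These moments are the only data that enter the series coefficients.

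Next I would locate the support and the endpoint behaviour of $f_U$. Because $\sum_i Y_i=1$, the square sum $U^2$ ranges over $[1/n,1]$, the lower endpoint corresponding to $Y_1=\cdots=Y_n=1/n$ and the upper one to a single $Y_i\to1$. A local analysis of the Dirichlet density near each configuration shows that $f_U$ has power-type (beta-type) singularities at the two ends, with exponents expressible through the $\alpha_i$; requiring these exponents to keep $f_U$ in $L^2$ is precisely what the condition (\ref{eq_4.4}) encodes, and it is automatically met in the spacings case $\alpha_i=1$, where $f_U$ is bounded. After an affine map sending the support of $U^2$ (or of $C^2=1/(nU^2)$) onto $[-1,1]$, I would choose Jacobi polynomials $P_k^{(a,b)}$ whose weight $(1-x)^a(1+x)^b$ matches these exponents, the Legendre family being the symmetric specialisation. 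The Fourier--Jacobi coefficient is then $c_k=h_k^{-1}E\big(P_k^{(a,b)}(U)\big)$, a finite linear combination of the moments from the first step, and this is what produces the three series (\ref{eq_4.5})--(\ref{eq_4.7}).

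Finally I would justify convergence: square integrability of $f_U$ gives $L^2$-convergence of each Fourier--Jacobi series, and the smoothness of $f_U$ in the interior of the support upgrades this to the uniform convergence asserted, while the two linear recursions for the coefficients keep the computation cheap. I expect the genuine obstacle to lie in the endpoint analysis: pinning down the precise singularity exponents of $f_U$, and hence the sharp form of (\ref{eq_4.4}), requires a careful asymptotic study of the constrained Dirichlet density, especially at the equal-weights corner $U^2=1/n$ where $\sum_i Y_i^2$ attains a nondegenerate minimum, whereas the moment computation and the coefficient bookkeeping are routine once that boundary behaviour is known.
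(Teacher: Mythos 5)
There is a genuine gap: your proposal never actually proves the statement it is supposed to prove. The Lemma's entire content is the set of relations (\ref{eq_4.1})--(\ref{eq_4.4}) --- the asymptotic $\varepsilon^{(n-1)/2}$ behaviour of $F_U$ at the lower endpoint $u=n^{-1/2}$, the two-sided bounds and the tail asymptotic $1-F_U(1-\varepsilon)\simeq \mathrm{const}\cdot\varepsilon^{\alpha-\alpha_{\mathrm{max}}}$ at the upper endpoint $u=1$, and the resulting square-integrability condition. Your text treats exactly this as a black box (``a local analysis of the Dirichlet density near each configuration shows that $f_U$ has power-type singularities'') and then, in your closing paragraph, explicitly defers it as ``the genuine obstacle.'' What you do develop --- the moment formula for $U^2$, the choice of Jacobi/Legendre weight, the Fourier--Jacobi coefficients, and the $L^2$-convergence argument --- is a sketch of the proof of Theorem 1 (the series (\ref{eq_4.6})--(\ref{eq_4.7})), not of the Lemma; the moments of $U$ play no role whatsoever in the Lemma.

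Concretely, what is missing is the following. For (\ref{eq_4.1}) the paper passes to the gamma representation, writes $x_i=(1+\varepsilon_i)\bar{x}$ so that $\tan^2(\varphi)=n^{-1}\sum_i\varepsilon_i^2$, and integrates the joint gamma density over the thin cone $\{\tan\Phi\le t\}$, which produces the volume $b_{n-1}t^{n-1}$ of an $(n-1)$-dimensional ball cross-section times a gamma integral; your remark that $\sum_i Y_i^2$ has a nondegenerate minimum at the equal-weights point is the right intuition but is never converted into this computation, and the explicit constant in (\ref{eq_4.1}) cannot be produced without it. For (\ref{eq_4.2})--(\ref{eq_4.3}) the key device, absent from your sketch, is the elementary two-sided implication
\begin{equation*}
\mathrm{max}\, Y_i \ge 1-\delta_1(\varepsilon) \;\Rightarrow\; U\ge 1-\varepsilon \;\Rightarrow\; \mathrm{max}\, Y_i \ge 1-\delta_2(\varepsilon)\, ,
\end{equation*}
with explicit $\delta_1(\varepsilon),\delta_2(\varepsilon)$, which sandwiches the tail event for $U$ between events about a single coordinate; since each $Y_i$ is beta$(\alpha_i,\alpha-\alpha_i)$ and the events $\{Y_i>1/2\}$ are disjoint, this reduces the tail of $U$ to beta tails and is precisely where the exponent $\alpha-\alpha_{\mathrm{max}}$ and the multiplicity factor $\#\{i\,|\,\alpha_i=\alpha_{\mathrm{max}}\}$ come from. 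Finally (\ref{eq_4.4}) then follows by differentiating these endpoint orders ($f_U(1-\varepsilon)=O(\varepsilon^{\alpha-\alpha_{\mathrm{max}}-1})$ and $f_U(n^{-1/2}+\varepsilon)=O(\varepsilon^{(n-3)/2})$) and checking when the squares are integrable, which yields the two conditions $\alpha-\alpha_{\mathrm{max}}>1/2$ and $n>2$; your proposal states that (\ref{eq_4.4}) ``encodes'' this but does not derive either condition.
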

\ \\[-5ex]
\begin{gather}
F_U \left( n^{1/2} + \varepsilon \right)
\simeq \frac{\Gamma (\alpha)}{\prod_{i=1}^n \Gamma (\alpha_i)}
\frac{(2 \pi)^{(n-1)/2}} {\Gamma\left(( n+1)/2\right)} n^{(3n-1)/4-\alpha} \cdot \varepsilon^{(n-1)/2} , \label{eq_4.1} \\[1.5ex]
\qquad \qquad \quad \ \alpha = \textstyle{\sum_{i=1}^n} \alpha_i\, , \quad
\varepsilon \to 0 \, . \nonumber
\end{gather}
\ \\[-5.5ex]
\begin{gather}
\sum_{i=1}^n
P \left\{ Y_i \geq 1- \varepsilon - \frac{\varepsilon^2}{2 (n-1)}  \right\} < 1 - F_U (1 - \varepsilon) \leq  \nonumber\\
\qquad \leq
\sum_{i=1}^n
P\left\{ Y_i \geq \frac{1}{2} + \frac{1}{2}
(1 - 4 \varepsilon + 2 \varepsilon^2)^{1/2}\right\} \, , \label{eq_4.2}\\
0 < \varepsilon < 1-2^{-1/2} , \textrm{where the } Y_i \textrm{ are beta} (\alpha_i , \alpha - \alpha_i) \textrm{-distributed.} \notag\\
1 - F_U (1 - \varepsilon) \simeq
\frac{
 \# \left\{ i | \alpha_i = \alpha_{\mathrm{max}} \right\}}  {B (\alpha_{\mathrm{max}} , \alpha - \alpha_{\mathrm{max}})}
\cdot \frac
{\varepsilon^{\alpha - \alpha_{\mathrm{max} } }}
{\alpha - \alpha_{\mathrm{max} }} \, , \ \ \varepsilon \to 0 \, .\label{eq_4.3}\\[0.5ex]
\!\!\!\! \textrm{The density } f_U \textrm{ is square integrable if } n > 2 \textrm{  and } \alpha - \alpha_{\mathrm{max}}    > 1/2 \, . \label{eq_4.4}
\end{gather}
Remark: For identical $\alpha_i=\alpha$  the $\alpha$  in the lemma has to be replaced by $\alpha n$ and we obtain the conditions
\begin{gather}
       f_U \textrm{ is square integrable if } n > \mathrm{max} \left( 2, 1 + \frac{1}{2 \alpha} \right)  ,\nonumber \\
         f_U \!\left( n^{-1/2}\right) = f_U (1)= 0 \textrm{ if } n > \mathrm{max}  \left( 3, 1 + \frac{1}{\alpha} \right) \, . \label{eq_4.5}
\end{gather}
\begin{proof}  With $x_i = (1 + \varepsilon_i) \bar{x} = (1 + \varepsilon_i) yn^{-1}$  we have $\tan^2(\varphi) = \frac{1}{n} \sum_{i=1}^n \varepsilon_i^2$  and consequently with $t \to 0$ :
\begin{gather*}
       P \{ \tan (\Phi) \leq t \} = \left( \textstyle{\prod_{i=1}^n} \Gamma (\alpha_i) \right)^{-1} \int_{\tan (\Phi) \leq t} \textstyle{\prod_{i=1}^n} e^{-x_i}x_i^{\alpha_i-1}dx_i \simeq \\
       \simeq b_{n-1} \left( \textstyle{\prod_{i=1}^n} \Gamma (\alpha_i) \right)^{-1} \int_0^\infty e^{-y} (yn^{-1})^{\alpha-n} (yn^{-1/2})^{n-1} n^{-1/2}
       dy \cdot t^{n-1}=\\
       = b_{n-1} \left( \textstyle{\prod_{i=1}^n}  \Gamma (\alpha_i) \right)^{-1} \Gamma (\alpha) n^{n/2-\alpha} t^{n-1} \, ,
\end{gather*}
where $b_{n-1}$  denotes the volume $\pi^{(n-1)/2} \bigl( \Gamma  \left((n+1)/2 \right)\bigr)^{-1}$ of the $(n\!-\!1)$-unit ball. Thus, (\ref{eq_4.1}) follows from
\begin{displaymath}
P \left\{ U \leq n^{-1/2}+ \varepsilon \right\} = P \left\{ \tan^2 (\Phi) \leq 2 \varepsilon \sqrt{n} + n\varepsilon^2 \right\} \simeq P \left\{ \tan(\Phi) \leq n^{1/4} \sqrt{2 \varepsilon} \right\} \, .
\end{displaymath}
Now let be $Y_n = \mathrm{max } \, Y_i$ and $0 < \delta < {1/2}$.  Then
\begin{gather*}
Y_n = 1- \delta \Rightarrow \textstyle{\sum_{i=1}^{n-1}} Y_i = \delta \Rightarrow
\frac{\delta^2}{n-1} \leq \textstyle{\sum_{i=1}^{n-1}} Y_i^2 \leq \delta^2 \Rightarrow \\
\left(  (1-\delta)^2 + \frac{\delta^2}{n-1} \right)^{1/2} \leq U \leq \left(  (1-\delta)^2 + \delta^2 \right)^{1/2} \, .
\end{gather*}
Conversely,  from $Y_n = \mathrm{max } \, Y_i$  and $U=1-\varepsilon \, , \ \varepsilon < 1-2^{-1/2}$,   it follows that $Y_n = 1- \delta$  with
\begin{gather*}
\varepsilon + \frac{1}{2 (n-1)} \varepsilon^2 <
\delta_1 (\varepsilon) := \frac{n-1}{n}-\frac{n-1}{n} \left( 1- \frac{2n}{n-1} \varepsilon (1 - \varepsilon/2)   \right)^{1/2} \leq \\
\leq \delta \leq \delta_2 (\varepsilon) := \frac{1}{2} - \frac{1}{2} \Big( 1 - 4 \varepsilon  (1 - \varepsilon/2) \Big)^{1/2} \, .
\end{gather*}
Therefore,
\begin{displaymath}
\mathrm{max } \, Y_i \ge 1 - \delta_1 (\varepsilon) \Rightarrow U \ge 1- \varepsilon \Rightarrow \mathrm{max } \, Y_i \ge 1- \delta_2 (\varepsilon) \, ,
\end{displaymath}
which implies (\ref{eq_4.2}). The asymptotic relation (\ref{eq_4.3}) is an immediate consequence of (\ref{eq_4.2}).\\

\par For small values $\alpha - \alpha_\mathrm{max}$ the density $f_U$ can increase near its end-point $u =1$. Then already
\begin{equation*}
       1 - F_U (1 - \varepsilon) = O \left( \varepsilon^{\alpha - \alpha_\mathrm{max} }\right) \, , \ \varepsilon \to 0 \, , \textrm{ entails } f_U (1 - \varepsilon) = O \left( \varepsilon^{\alpha - \alpha_\mathrm{max} -1 }\right) \, ,
\end{equation*}
which implies  (\ref{eq_4.4}). \  \\[-3ex]
\end{proof}

\par Now with the Jacobi polynomials $G_k (\alpha n , \alpha n \textrm{; } x)$  from (A.S.22.3.3), belonging to the weight function $w (x) = x^{\alpha n -1}, 0 < x \le 1$, and the shifted Legendre polynomials $P_k^\ast (x) = P_k (2x-1) = {2k \choose k} G_k (1 , 1; x)$,  (A.S.22.2.11), (A.S.22.5.2), we obtain with (\ref{eq_2.2}), (\ref{eq_2.6}) and
\begin{displaymath}
      \int_0^1 \left( 1 - F_C (x) \right) x^{\alpha n +j-1}  dx = \frac{\gamma_{\alpha n +j}}{\alpha n +j}
\end{displaymath}
the following two representations of the cdf  $F_U (u)= 1 - F_C \left( n^{-1/2} u^{-1}\right)$:

\begin{thm}  Let $U$ be defined as in \emph{(\ref{eq_1.3})}. Then
\begin{gather}
       \!\!\!\!\!\!\!\!\!\!\!\!\!\! F_U (u) = \label{eq_4.6}  \\
       \! \!\!\!\!\!\!\!\!\!\!\!\!\!\sum_{k=0}^\infty (-1)^k (\alpha n + k) {\alpha n + 2k \choose k}
        \!\!\left( \sum_{j=0}^k ( -1)^j \,  \frac{(\alpha n + j)_k}{j! (k-j)! }  \frac{\gamma_{\alpha n +j}}{\alpha n +j} \right)
       G_k (\alpha n , \alpha n ; n^{-1/2} u^{-1})\nonumber \, ,
\end{gather}
which is uniformly and absolutely convergent on compact intervals $\lbrack a ; b\rbrack \subset \left(1/\sqrt{n}; 1\right)$ under the first condition in \emph{(\ref{eq_4.5})}. Besides,
\begin{gather}
     \!\!\!\!\!\!\!\!\!\!\!\!\!\! F_U (u) =  \left( n^{1/2} u\right)^{\alpha n-1} \cdot \label{eq_4.7}\\
        \!\!\!\!\!\!\!\!\!\!\!\!\!\!
        \sum_{k=0}^\infty (-1)^k (2 k + 1)
        \left( 
        {\sum_{j=0}^k} (-1)^j
        \displaystyle{\frac{(k - j + 1)_{2j}}{j!j!}}
        \displaystyle{\frac{\gamma_{\alpha n +j}}{\alpha n +j}}  \right)
        \!\!P_k^\ast \! \left( n^{-1/2} u^{-1} \right) \, ,     \nonumber
\end{gather}
which is uniformly and absolutely convergent at least for $n \!\! >$max$\left(  2, 1 \!+ \!\frac{1}{2 \alpha}, \frac{3}{2 \alpha} \right)$.
\end{thm}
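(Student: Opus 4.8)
The plan is to read both series off as ordinary Fourier expansions of a single cdf-type function in the two orthogonal systems, using the moment identity
\[
\int_0^1 \left(1 - F_C(x)\right) x^{\alpha n + j - 1}\, dx = \frac{\gamma_{\alpha n + j}}{\alpha n + j}
\]
stated just above the theorem as the only analytic input. For (\ref{eq_4.6}) I would expand $g(x) := 1 - F_C(x) = F_U(n^{-1/2}u^{-1})\big|_{x = n^{-1/2}u^{-1}}$ on $(0,1)$ in the Jacobi polynomials $G_k(\alpha n, \alpha n; x)$, which are orthogonal for the weight $w(x) = x^{\alpha n - 1}$. For (\ref{eq_4.7}) I would instead absorb this weight into the function and expand $\tilde g(x) := x^{\alpha n - 1}(1 - F_C(x))$ in the shifted Legendre polynomials $P_k^\ast$, whose weight is $1$; solving back for $F_U$ through the substitution $x = n^{-1/2} u^{-1}$ then produces the prefactor $(n^{1/2}u)^{\alpha n - 1} = x^{1 - \alpha n}$ appearing in (\ref{eq_4.7}).

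The coefficient computation is then essentially formal. Writing each polynomial in hypergeometric monomial form — $G_k(\alpha n, \alpha n; x)$ equal to a $j$-independent constant times ${}_2F_1(-k, \alpha n + k; \alpha n; x)$, and $P_k^\ast(x) = P_k(2x - 1) = (-1)^k\,{}_2F_1(-k, k+1; 1; x)$ — and integrating term by term against the moment identity turns each Fourier coefficient into a finite sum of the form $\sum_{j=0}^k(\cdots)\,\gamma_{\alpha n + j}/(\alpha n + j)$, since $(-k)_j/j! = (-1)^j\binom{k}{j}$ supplies the alternating signs. The two bracketed sums in (\ref{eq_4.6}) and (\ref{eq_4.7}) then follow from the Pochhammer identities
\[
\binom{k}{j}\frac{(\alpha n + k)_j}{(\alpha n)_j} = \frac{k!}{(\alpha n)_k}\,\frac{(\alpha n + j)_k}{j!\,(k-j)!}, \qquad \binom{k}{j}\frac{(k+1)_j}{j!} = \frac{(k - j + 1)_{2j}}{j!\,j!},
\]
and the remaining $j$-independent constants (the A.S. normalization of $G_k$, the norm $h_k = \int_0^1 G_k^2\, x^{\alpha n -1}\,dx$ from the orthogonality relation, and the Legendre norm $\int_0^1(P_k^\ast)^2\,dx = (2k+1)^{-1}$) collapse by routine Gamma-function bookkeeping to the stated prefactors $(-1)^k(\alpha n + k)\binom{\alpha n + 2k}{k}$ and $(-1)^k(2k+1)$.

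The substantive point is convergence. Both series converge in $L^2$ of the respective weight simply because $g$ and $\tilde g$ are bounded, but the asserted uniform and absolute convergence requires the square integrability of $f_C$ — equivalently of $f_U$ — which by the Lemma holds precisely under the first condition in (\ref{eq_4.5}), since $g'(x) = -f_C(x)$. For (\ref{eq_4.6}) I would invoke the standard interior equiconvergence theory for Jacobi expansions: on a compact $[a;b]\subset\left(1/\sqrt n; 1\right)$ the expansion of a locally smooth $L^2$ function converges uniformly and absolutely, which is exactly the stated domain. The harder case is (\ref{eq_4.7}), where convergence is claimed up to the closed interval $[0,1]$; here I would control the endpoint behaviour of $\tilde g$ using (\ref{eq_4.1})--(\ref{eq_4.3}). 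Near $x = 1$ (that is, $u \to n^{-1/2}$) relation (\ref{eq_4.1}) gives $\tilde g(x) = O\!\left((1-x)^{(n-1)/2}\right)$, so $x=1$ is harmless once $n > 3$; near $x = 0$ one has $\tilde g(x) = x^{\alpha n - 1}$ exactly (because $F_C \equiv 0$ for $x \le n^{-1/2}$), whose derivative lies in $L^2$ precisely when $2(\alpha n - 2) > -1$, i.e. $n > 3/(2\alpha)$. These boundary requirements, together with $n > 2$ and the square-integrability threshold $n > 1 + 1/(2\alpha)$, reproduce exactly the hypotheses $n > \max\!\left(2,\, 1 + \tfrac{1}{2\alpha},\, \tfrac{3}{2\alpha}\right)$ of (\ref{eq_4.7}). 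I expect the main obstacle to be verifying that the coefficient decay genuinely yields absolute uniform convergence up to the boundary — rather than merely $L^2$ or interior convergence — which I would settle by an integration-by-parts and Bessel-inequality estimate exploiting the vanishing of $\tilde g$ and $\tilde g'$ at both endpoints under these conditions.
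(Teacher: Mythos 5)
Your proposal is correct and follows essentially the same route as the paper's (very terse) proof: both series are read off as orthogonal expansions of $1-F_C(x)$ with respect to the Jacobi weight $x^{\alpha n-1}$ and of $x^{\alpha n-1}\left(1-F_C(x)\right)$ with respect to the Legendre weight, the coefficients coming from the moment identity $\int_0^1 (1-F_C(x))\,x^{\alpha n+j-1}dx = \gamma_{\alpha n+j}/(\alpha n+j)$, with uniform and absolute convergence deduced from the square integrability of $f_C$ (equivalently $f_U$, via the Lemma and (\ref{eq_4.5})), and the extra condition $n>3/(2\alpha)$ entering exactly as you say, to make the derivative contribution $x^{\alpha n-2}$ square integrable near $x=0$. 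The only minor imprecision is your ``$n>3$'' at the endpoint $x=1$: the derivative there is $O\!\left((1-x)^{(n-3)/2}\right)$, which is square integrable already for $n>2$, so the theorem's stated hypothesis suffices and no strengthening is needed.
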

\begin{proof}  There remains only an explanation for the additional condition \linebreak $n  \!>  3/(2\alpha)$  in the $2^{\textrm{nd}}$ formula. The $2^{\textrm{nd}}$ series is directly obtained from the corresponding series for the function $x^{\alpha n -1} (1-F_C (x)),\, 0 < x < 1$,  by the substitution $x= n^{-1/2} u^{-1}$.  The square integrability of the density $f_C (x)$ on $\lbrack n^{-1/2}; 1 \rbrack$  is equivalent to the square integrability of $f_U$. The additional condition for $n$ is sufficient for the square integrability of $x^{\alpha n -2}$. Hence, the derivative of the above function is square integrable under the given condition.
\end{proof}
In particular for $\alpha = 1$,  it follows with the moments $\gamma_j$  from (\ref{eq_3.7}) that
\begin{gather}
        \!\!\!\!\!\!\!\!\!\!\!\!\!\!F_U \!\!\!  ~\left( n^{-1/2} x^{-1}\right) =
       1 - F_C (x) = \nonumber \\
       \!\!\!\!\!\!\!\!\!\!\!\!\!\! \sum_{k=0}^\infty (-1)^k (2k +1)
       \left(  
       \sum_{j=0}^k (-1)^j \displaystyle{\frac{(k-j+1)_{2j}}{j!j!}
       \frac{\gamma_{j+1}}{j+1}} \right) \!\!P_k^\ast (x) \, . \label{eq_4.8}
\end{gather}
However, it should be noted that the moments $\gamma_j , j=1, \ldots, n-1$, are computed by infinite series, whereas the coefficients in the orthogonal series (\ref{eq_4.6}), (\ref{eq_4.7}) are given by finite calculations only. This is important since a high number of digits for the moments $\gamma_{\alpha n +j}$ is required to obtain sufficiently accurate values of $F_U$.\\

\par From (\ref{eq_1.2}) we can also obtain the moments
\begin{equation}
       \mu_{2k} = E (U^{2k}) = \frac{E (Z^k)}{E (Y^{2k})} = \frac{k!}{(\alpha n)_{2k}} \sum_{(k)} \prod_{i=1}^n \frac{(\alpha)_{2k_i}}{k_i!} \, , \label{eq_4.9}
\end{equation}
where $\sum_{(k)}$ means summation over all decompositions $k_1 + \ldots + k_n = k $, \linebreak $k_1, \ldots , k_n \in \mathbb{N}_0$ . Since only partial sums are used in the computing procedure (\ref{eq_2.1}), these moments can be computed by the same method, starting with the divergent series
\begin{displaymath}
       \sum_{k=0}^\infty (\alpha)_{2k} \frac{z^k}{k!} \, .
\end{displaymath}
The square integrability of the densities $f_U$ and $f_{U^2}$ is equivalent. Therefore, we get a further convergent orthogonal series
\begin{gather}
 \!\!\!\!\!\!\!\!\!\!\!\!\!\!1 - F_{U^2} (x) = \label{eq_4.10}\\
  \!\!\!\!\!\!\!\!\!\!\!\!\!\!
  \sum_{k=0}^\infty (-1)^k (2k +1)
       \left( 
       \sum_{j=0}^k(-1)^j
      \displaystyle{ \frac{(k-j+1)_{2j}}{j!j!}
       \frac{\mu_{2(j+1)}}{j+1}}\right) \!\!P_k^\ast (x) \, , \
        n^{-1} \le x \le 1 \, ,  \nonumber
\end{gather}
but the extreme magnitude of the sums in (\ref{eq_4.9}) is unfavourable.\\

\par A generalization of the orthogonal series in this section to $U$ computed from $n$ completely independent gamma random variables with not identical order parameters is feasible. However, the required moments would have to be computed from a product of $n$ generating power series instead from just an $n^{\textrm{th}}$ power.

\section{Integral representations of the cumulative distribution function of $U^2$}
Under the $2^{\textrm{nd}}$ condition in (\ref{eq_4.5}) the density of $U_n^2 - n^{-1}$  vanishes at its end-points and is of bounded variation. Then the cdf is representable by an integrated Fourier-sine series , i.e.
\begin{equation}
       P \{ U_n^2 -n^{-1} \le x \} = \frac{2}{\pi} \sum_{k=1}^\infty \Im \! \left( \hat{f}_{U_n^2 -n^{-1}} (t_k)\right)
       \frac{1-\cos (t_k x)}{k}  , \
        t_k = \frac{k \pi}{1-n^{-1}} \, ,   \label{eq_5.1}
\end{equation}
which is absolutely und uniformly convergent with terms at least of order $o(k^{-2})$.\\
\par In a similar way a series for $P \left \{ \log ( \sqrt{n}U ) \le x \right\}$ can be derived from (\ref{eq_3.6}), replacing $s$ by $it_k = ik\pi/\log (\sqrt{n})$. However, an accurate computation of the coefficients is difficult for large $k$. Also a direct use of the Fourier inversion formula is not recommended because of the comparatively slow decrease of the cf of $\log (\sqrt{n}U)$.
\par Before we return to (\ref{eq_5.1}) a univariate integral representation of \mbox{$P \{ U_n^2 \le x \}$} is given by means of parabolic cylinder functions $D_{-\alpha}$ (A.S.19.3.1).

\begin{thm}
Let $X_1, \ldots , X_n$  be completely independent random variables with gamma densities $(\Gamma (\alpha_j))^{-1} e^{-x} x^{\alpha_j-1}$, $\alpha = \alpha_1 + \ldots + \alpha_n$ and $U_n^2 = \left( \textstyle\sum_{i=1}^n X_i \right)^{-2} \textstyle\sum_{i=1}^n X_i^2$. Then the cdf of $U_n^2$  is given by
\begin{gather*}
       \!\!\!\!\!\!\!\!\!\!\!\!\!\!P \{ U_n^2 \le x \} =
     \pi^{-3/2} \sqrt{2} \Gamma (\alpha) x^{(\alpha-1)/2} \cdot \\
     \!\!\!\!\!\!\! \!\!\!\!\!\!\!\int_0^\infty \Re\left(\left( {\textstyle \prod_{j=1}^n \textrm{exp} \!\left( -\frac{is^2}{4} \right) \!D_{-\alpha_j}(i \sqrt{i} s )}  \right)
       \!\textrm{exp} \!\left( {\textstyle \frac{is^2}{4x}} \right) \! D_{-\alpha} (\sqrt{-i}sx^{-1/2}) \sqrt{i} \right) \!ds \, ,
 \end{gather*}
at least for $ \alpha > 1$.
\end{thm}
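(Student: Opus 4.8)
The plan is to reduce the statement to a Laplace (Bromwich) inversion of a single, completely factorised transform and then to perform one of the two resulting integrations in closed form. First I would write the event in the scale-invariant form $\{U_n^2\le x\}=\{Z-xY^2\le 0\}$ with $Z=\sum_iX_i^2$ and $Y=\sum_iX_i$ as in (\ref{eq_1.2}), and represent the indicator of $\{Z-xY^2\le 0\}$ by the Bromwich integral $\frac{1}{2\pi i}\int_{a-i\infty}^{a+i\infty}p^{-1}e^{p(xY^2-Z)}\,dp$ with $a>0$; taking expectations gives $P\{U_n^2\le x\}=\frac{1}{2\pi i}\int_{a-i\infty}^{a+i\infty}p^{-1}E\!\left(e^{pxY^2-pZ}\right)dp$. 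The only obstruction to a product form is the rank-one term $pxY^2$, which couples the $X_i$. I would decouple it by the Gaussian (Hubbard--Stratonovich) identity $e^{pxY^2}=(4\pi px)^{-1/2}\int_{-\infty}^{\infty}e^{-v^2/(4px)+vY}\,dv$, valid with the real positive coefficient $px$. Inserting this and using independence, $E(e^{pxY^2-pZ})$ becomes a Gaussian $v$-integral over $\prod_jE(e^{vX_j-pX_j^2})$, and each factor is given by the bivariate transform (\ref{eq_3.1}) as a parabolic cylinder function $D_{-\alpha_j}\!\big((1-v)/\sqrt{2p}\big)$. The substitution $w=(1-v)/\sqrt{2p}$ then cleans the arguments to $D_{-\alpha_j}(w)$ and produces the factor $\Psi(w):=\exp(nw^2/4)\prod_jD_{-\alpha_j}(w)$, together with a remaining Gaussian weight depending on $w$ and $p$.

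Next, since the $D_{-\alpha_j}$-product no longer depends on $p$, I would carry out the Bromwich $p$-integral explicitly. Collecting the powers of $p$ (namely $p^{-1}$ from the Heaviside kernel, $p^{-1/2}$ from the Gaussian normalisation, $p^{-\alpha/2}$ from $\prod_j(2p)^{-\alpha_j/2}$, and $p^{1/2}$ from $dv$) leaves $p^{-1-\alpha/2}$, and substituting $\mu=p^{-1/2}$ turns the $p$-integral into $\int\mu^{\alpha-1}\exp\!\big(-\mu^2/(4x)+(\mathrm{const}\cdot w)\mu\big)\,d\mu$. By the integral representation of $D_{-\alpha}$ in the first line of (\ref{eq_3.1}) this equals, up to the factors $\Gamma(\alpha)$, the power $x^{(\alpha-1)/2}$ and the numerical constants $\sqrt2$ and $\pi^{-3/2}$ (the latter assembled from $(4\pi)^{-1/2}$ and the $(2\pi)^{-1}$ of the Bromwich contour), the single term $\exp(\cdot)\,D_{-\alpha}(\,\cdot\,x^{-1/2})$. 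This is exactly where the total order $\alpha=\sum_j\alpha_j$ enters, through $Y\sim$ gamma$(\alpha)$, and where the prefactor is generated; it leaves precisely one integration, over $w$.

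Finally I would rotate the $w$-contour onto the real axis, writing $w$ as a fixed rotation of a real variable $s$. This rotation produces the branch factors $i\sqrt i$, $\sqrt{-i}$ and $\sqrt i$ in the arguments of the $D_{-\alpha_j}$, of the $D_{-\alpha}$ and in the Jacobian, and it converts the surviving Gaussian weights into $\prod_j\exp(-is^2/4)$ (from $\exp(nw^2/4)$ under $w^2=-is^2$) and $\exp(is^2/(4x))$. Conjugate symmetry of the integrand under $s\mapsto-s$ then folds $(-\infty,\infty)$ onto $(0,\infty)$ and produces the real part, giving the stated single integral.

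The main obstacle I anticipate is analytic rather than algebraic: justifying the interchange of the $p$- and $v$-integrations with the expectation, and, above all, the rotations of the $p$- and $w$-contours past the singularities of $D_{-\alpha_j}$ and $D_{-\alpha}$, with the correct determination of the fractional powers $\sqrt i,\sqrt{-i}$. Controlling this requires the asymptotics $D_{-\alpha}(z)\sim z^{-\alpha}e^{-z^2/4}$ valid for $|\arg z|<3\pi/4$, which also governs the convergence of the final $s$-integral at infinity; the hypothesis $\alpha>1$ is exactly what renders the Bromwich $p$-integral (carrying the extra $p^{-1}$ from the Heaviside kernel) and the resulting $w$-integral absolutely convergent, so that all the interchanges above are legitimate.
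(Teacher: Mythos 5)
Your proof founders at its very first displayed identity: for $\Re p=a>0$ and $n^{-1}<x<1$ the expectation $E\bigl(e^{pxY^2-pZ}\bigr)$ does not exist, so the Bromwich representation of $P\{U_n^2\le x\}$ that the rest of your argument rests on is meaningless. Indeed $|e^{pxY^2-pZ}|=e^{a(xY^2-Z)}$, and on the region where all $X_i$ lie near $y/n$ one has $xY^2-Z=(x-n^{-1})y^2\,(1+o(1))>0$, while the joint gamma density decays only like a power times $e^{-y}$; hence $E\,e^{a(xY^2-Z)}=+\infty$ for every $a>0$. A quadratic exponent with positive real part always beats the exponential gamma tails, so no hypothesis of the form $\alpha>1$ can restore convergence; in the theorem that condition controls the decay of the final $s$-integrand via (\ref{eq_5.2}), (\ref{eq_5.3}), not the existence of such an expectation. (Your secondary claim that $\alpha>1$ is what makes the Bromwich $p$-integral absolutely convergent is also off: after your bookkeeping the $p$-integrand is $O(|p|^{-1-\alpha/2})$, integrable along a vertical line for every $\alpha>0$.) Nor is this repaired by the natural reordering, i.e.\ inserting the Hubbard--Stratonovich identity pathwise \emph{before} taking expectations so that the inner expectation becomes the finite product $\prod_j E\bigl(e^{vX_j-pX_j^2}\bigr)$: the triple integral over $p$, $v$ and the probability space is still not absolutely convergent for $x>n^{-1}$, because integrating the modulus over $v$ regenerates a factor $\exp\bigl(|p|^2xY^2/a\bigr)$. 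Fubini therefore fails in \emph{every} order of integration, and all of your subsequent interchanges and contour rotations would need a genuinely different justification (regularization, conditional convergence, or the like) that the proposal does not supply.

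This is precisely the difficulty the paper's proof is organized to avoid: it never exponentiates $+xY^2$. It starts from the joint Laplace transform $E\bigl(e^{-sY-tZ}\bigr)$ of (\ref{eq_3.1}), with both signs negative, inverts over $s$ to obtain the Laplace transform of the conditional density of $ZY^{-2}\mid Y=y$ --- where $xy^2$ is a constant, not a random variable --- uses the independence of $U_n$ and $Y$ to drop the dependence on $y$, and so obtains the Laplace transform (\ref{eq_5.8}) of the cdf of $U_n^2$ \emph{in the argument} $x$. Separately, (\ref{eq_5.4})--(\ref{eq_5.6}) compute the Laplace transform in $x$ of the claimed integral formula, and the theorem follows by comparing (\ref{eq_5.8}) with (\ref{eq_5.6}) and the uniqueness of Laplace transforms. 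Your algebra of the $p$- and $w$-integrations does reproduce the correct shape of the answer (the powers of $p$, the role of $\alpha=\sum_j\alpha_j$ through $Y\sim\textrm{gamma}(\alpha)$, and the branch factors $i\sqrt i$, $\sqrt{-i}$, $\sqrt i$), but as a proof the plan breaks at the first step and cannot be completed along the route you describe.
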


{\noindent Remarks: The theorem should be true for all $\alpha>0$,  but the additional condition implies the absolute convergence of the integral and facilitates the proof. The cdf of Greenwood's statistic is obtained by identical $\alpha_j =1$ and $\alpha = n$.  For the relation of  $D_{-1}$ to the error function see (\ref{eq_3.2}). Splitting the integral into integrals over intervals between successive zeros of the integrand provides  an alternating series  which enables a good control of the remainder in many cases. E.g. by integration over the intervals (0; 2.978235860548) and (2.978235860548; 3.004569229995) we find the inequalities}
\begin{displaymath}
       0.98999999977 < P \{ 60 U_{60}^2 < 2.7167772982\} < 0.99000000060 \, .
\end{displaymath}
With increasing $n$ a higher computing accuracy is required to compensate the extinction of the leading digits, in particular for the upper tail of this distribution.

\begin{proof} With (A.S.19.3.1), (A.S.19.12.3) and (A.S.13.5.1) we find for all po\-sitive $\alpha$  that
\begin{gather}
       D_{-\alpha} (i \sqrt{i}s) = O (s^{\mathrm{max}  (-\alpha , \alpha - 1)}) \, , \  s \to \infty \, ,\   \label{eq_5.2}\\
       \ \nonumber\\
       \exp ( -is^2/4) D_{-\alpha}(\sqrt{-i}s) \simeq \exp (i \alpha \pi/4) s^{-\alpha}\, , \  s \to \infty \, ,  \label{eq_5.3}
        \\[1.5ex]
       \textrm{(see also (A.S.19.8.1)) }. \nonumber
\end{gather}
Hence, the above integrand is absolutely $\displaystyle{O \! \left( s^{-k-2\sum_{\alpha_ j < 1 \slash 2}{\alpha_j}}\right)}$, where $k =$\linebreak $ \# \{ \alpha_j  |  \alpha_j \! \ge \! 1/2 \}$, and the integral is absolutely convergent if max $\alpha_j \ge $\linebreak $ 1/2$  or  $\alpha > 1/2$.\\
\par Moreover, to justify a change of the order of integration in (\ref{eq_5.6}) according to Fubini,
\begin{gather*}
      \int_0^\infty \int_0^\infty \left( \textstyle\prod_{j=1}^n | D_{-\alpha_j} (i \sqrt{i}s) | \right) | D_{-\alpha} ( x^{-1/2} s \sqrt{-i})  | x^{(\alpha - 1)/2} e^{-tx} dsdx =\\
      \int_0^1 \left( \int_0^\infty \left(  \textstyle\prod_{j=1}^n | D_{-\alpha_j} (i \sqrt{i}s) | \right) | D_{-\alpha} ( x^{-1/2} s \sqrt{-i}) | ds \right) x^{(\alpha - 1)/2} e^{-tx} dx \ +\\
       \int_1^\infty  \left( \int_0^\infty \left( \textstyle\prod_{j=1}^n | D_{-\alpha_j}  (i \sqrt{i}s \sqrt{x}) | \right) | D_{-\alpha} (s \sqrt{-i})| ds \right)
x^{\alpha/2} e^{-tx} dx <\infty
\end{gather*}
is verified by (\ref{eq_5.2}) and (\ref{eq_5.3}).\\
\par The Lt of
\begin{gather}
        \frac{x^{(\alpha-1)/2}}{\sqrt{2\pi}}
       \exp \! \left( - \frac{     x^{-1} s^2 e^{-2i\phi}    }{     4     } \right)
       D_{-\alpha} \! \left( x^{-1/2} se^{-i\phi}  \right)\nonumber\\
       \textrm{ with regard to $x$ is given by}  \nonumber\\
       \exp\!  \left( -se^{-i\phi} \sqrt{2t} \right)
       (2t)^{-(\alpha+1)/2}\, , \ t >0 \, , \ 0 \le \phi < \pi/4 \, . \label{eq_5.4}
\end{gather}
Due to dominated convergence this relation holds also for the limit $\phi = \pi/4$,   i.e.
\begin{gather}
        \frac{x^{(\alpha-1)/2}}{\sqrt{2\pi}}\exp \! \left( \frac{is^2}{4x} \right) D_{-\alpha} \! \left(x^{-1/2} s \sqrt{-i}  \right) \nonumber\\
          \textrm{ has the Lt }  \exp (is \sqrt{2it}) (2t)^{-(\alpha+1)/2}\, .  \label{eq_5.5}
\end{gather}
Therefore,
\begin{gather}
     \frac{x^{(\alpha-1)/2}}{\sqrt{2\pi}} \cdot \nonumber \\
       \int_0^\infty  \left(
      \prod_{j=1}^n
      \exp \! \left( - \frac{is^2}{4}  \right) D_{-\alpha_j}
      \! \left( i \sqrt{i}s  \right)
        \right)
     \exp \! \left( \frac{is^2}{4x}  \right) D_{-\alpha}
     \! \left( x^{-1/2} s \sqrt{-i} \right) \sqrt{i}\ ds \nonumber \\
     \textrm{ has the Lt}
     \label{eq_5.6}\\
     \int_0^\infty
         \left(
      \prod_{j=1}^n
      \exp \! \left( - \frac{is^2}{4}  \right) D_{-\alpha_j}
      \! \left( i \sqrt{i}s  \right)  \right)
     \exp (is\sqrt{2it}) (2t)^{-(\alpha +1)/2} \sqrt{i}\ ds  \, .    \nonumber
\end{gather}
By (A.S.19.12.3) in conjunction with (A.S.13.1.5)  we obtain
\begin{equation}
   (2t)^{-\alpha/2} \exp \left( \frac{(1+is)^2}{8t} \right) D_{-\alpha}
   \left( \frac{1+is}{\sqrt{2t}}\right) = O \left( | 1 + is |^{-\alpha} \right) \, , \ s \to \infty \, ,
\end{equation}
since  $\Re \Bigl( (1+is)^2  \Bigr) < 0 \  if  \ |s| > 1$.\\

\par From the Lt in (\ref{eq_3.1}) we get the Lt  $f_{Y,Z}^\ast (s,t)$ of the joint density $f_{Y,Z}$ of  $Y = \textstyle\sum_{j=1}^n X_j$ and $Z = \textstyle\sum_{j=1}^n X_j^2$ ,  and find by the Laplace inversion formula with regard to $s$ the Lt of $f_{Y,Z}$ with regard to $Z$ as
\begin{gather*}
            \frac{1}{2\pi} \int_{-\infty}^\infty
            \left( \prod_{j=1}^n \exp \!
            \left( \frac{(1+is)^2}{8t} \right)  D_{-\alpha_j} \!
            \left(  \frac{(1+is)}{\sqrt{2t}}  \right) \right)
            (2t)^{-\alpha/2} e^{isy}ds = \\
            \frac{1}{2\pi} \int_{-\infty}^\infty
            \left( \prod_{j=1}^n \exp \!
            \left( \frac{1}{4}  \left(  \frac{1}{\sqrt{2t}} + is \right)^2 \right) \!\! D_{-\alpha_j}\!
            \left( \frac{1}{\sqrt{2t}} + is  \right)
            \right)\!\!
            (2t)^{-(\alpha-1)/2} e^{is\sqrt{2t}y}ds =\\
            e^{-y} \frac{1}{2\pi i} \int_{\frac{1}{\sqrt{2t}}-i\infty}^{\frac{1}{\sqrt{2t}}+i\infty}
             \Big( \prod_{j=1}^n
             e^{\zeta^2/4}
             D_{-\alpha_j} (\zeta) \Big)
             (2t)^{-(\alpha-1)/2} e^{\zeta \sqrt{2t}y} d\zeta =\\
             e^{-y} \frac{1}{2\pi} \int_{-\infty}^\infty
             \Big( \prod_{j=1}^n
              e^{-s^2/4} D_{\alpha_j} (is) \Big)
              (2t)^{-(\alpha-1)/2} e^{is\sqrt{2t}y} ds \, .
\end{gather*}

Then the Lt of the conditional density of  $ZY^{-2} | Y = y$ is
\begin{gather*}
       \frac{\Gamma (\alpha)}{2\pi}  \int_{-\infty}^\infty
       \Big( \prod_{j=1}^n
       e^{-s^2/4} D_{-\alpha_j} (is)
       \Big)
       (2t)^{-(\alpha-1)/2}
       e^{is\sqrt{2t}} ds =\\
       \frac{\Gamma (\alpha)}{\pi}  \int_0^\infty \Re
       \left( \Big( \prod_{j=1}^n
       e^{-s^2/4} D_{-\alpha_j} (is)
       \Big)  (2t)^{-(\alpha-1)/2}
       e^{is\sqrt{2t}}
       \right) ds \, ,
\end{gather*}
independent of $y$. Hence, the Lt of the cdf of $U_n^2$  is
\begin{gather}
      \frac{2}{\pi} \Gamma (\alpha) \int_0^\infty \Re
       \left( \Big( \prod_{j=1}^n
       e^{-s^2/4} D_{-\alpha_j} (is)
        \Big) (2t)^{-(\alpha+1)/2}
       e^{is\sqrt{2t}}
       \right) ds = \nonumber \\
       \ \label{eq_5.8}\\
       \frac{2}{\pi} \Gamma (\alpha) \int_0^\infty \Re
       \left( \Big(\prod_{j=1}^n
       e^{-i s^2/4}  D_{-\alpha_j} (i\sqrt{i}s)  \Big)
       (2t)^{-(\alpha+1)/2}
       e^{is\sqrt{2it}} \sqrt{i}
       \right) ds \, . \nonumber
\end{gather}
Now, theorem 2 follows by comparison of (\ref{eq_5.8}) with (\ref{eq_5.6}).
\end{proof}

   With a sufficient number of terms the partial sums of the integrand in the following theorem show a rather smooth behaviour within the ``main part'', also for larger $n$. The part of the integrand oscillating around zero becomes absolutely small and is more distant from zero. Its contribution to the integral becomes neglectable. For simplicity we give here only the formula derived from identically distributed gamma random variables. The generalization to different order parameters $\alpha_j$  is obvious.

\begin{thm} Let $X_1, \ldots , X_n$  be i.i.d. random variables with a gamma density  $(\Gamma(\alpha))^{-1} e^{-x} x^{\alpha-1}$ and $U_n^2 = \left( \textstyle\sum_{i=1}^n X_i \right)^{-2}  \textstyle\sum_{i=1}^n X_i^2 $. Then the cdf of $U_n^2 - n^{-1}$ is given by
\begin{gather}
      P \left\{  U_n^2 - n^{-1} \le x  \right\} = \nonumber \\
      \frac{\Gamma (\alpha n +1 )}
      {\pi^2 ( \alpha n/e)^{\alpha n} }
      \lim_{K \to \infty} \int_{-\infty}^\infty \sum_{k=1}^K
      \Im \left\{
      \left( \psi (s,t_k)\right)^n
      \exp \left( - i \alpha n s - i (n- 1)^{-1} k \pi \right)
       \right\} \cdot \nonumber \\
       \quad \frac{1- \cos \bigl( (1-n^{-1})^{-1} k \pi x \bigr)}{k} ds \, ,   \label{eq_5.9} \\
       \  \nonumber \\
   \psi (s,t) = \left( \frac{i}{2t} \right)^{\alpha/2}
       \exp \! \left(\frac{i(1 - is)^2}{8t} \right) D_{-\alpha} \!
       \left( \sqrt{i} \frac{1- is}{\sqrt{2t}}\right) \, , \nonumber \\
       t_k = \frac{k\pi}{\alpha^2 n (n-1)}\, , n > \mathrm{max}
 \left( 3,1 + \frac{1}{\alpha}\right) \, , \ 0 \le x \le 1 - \frac{1}{n} \, . \nonumber
\end{gather}
\end{thm}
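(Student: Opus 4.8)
The plan is to combine the integrated Fourier--sine series (\ref{eq_5.1}) for the cdf of $U_n^2-n^{-1}$, which is available precisely under the second condition in (\ref{eq_4.5}), with a univariate integral representation of the characteristic function $\hat f_{U_n^2}$ obtained from the bivariate Laplace transform (\ref{eq_3.1}). The theorem then reduces to identifying $\tfrac{2}{\pi}\,\Im\bigl(\hat f_{U_n^2-n^{-1}}(t_k)\bigr)$, where there $t_k=k\pi/(1-n^{-1})$, with the $k$-th integral appearing in (\ref{eq_5.9}).

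First I would identify the function $\psi$. Setting $\sigma=-is$ and $\theta=-it$ in (\ref{eq_3.1}) and tracking the branches $(2(-it))^{-\alpha/2}=(i/(2t))^{\alpha/2}$, $1/\sqrt{-2it}=\sqrt{i}/\sqrt{2t}$ and $1/(8(-it))=i/(8t)$, one checks that $\psi(s,t)=E\bigl(e^{isX+itX^2}\bigr)$ for a single $\mathrm{gamma}(\alpha)$ variable $X$; hence $\bigl(\psi(s,t)\bigr)^n$ is exactly the joint characteristic function of $(Y,Z)=\bigl(\sum_j X_j,\sum_j X_j^2\bigr)$. The analytic continuation of (\ref{eq_3.1}) from $\Re\theta>0$ to the imaginary axis is legitimate because the continued integral is a genuine characteristic function, and the decay estimates (\ref{eq_5.2}), (\ref{eq_5.3}) govern both the continuation and, more importantly, the integrability of $\bigl(\psi(s,t)\bigr)^n$ in $s$ once $n$ exceeds $\max(3,1+1/\alpha)$.

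Next I would produce the cf of $U_n^2$ itself. Since $U_n^2=Z/Y^2$ is independent of $Y$ (the characterization used throughout the paper), conditioning on the mean value $Y=\alpha n$ and invoking scale invariance gives $\hat f_{U_n^2}(\tau)=E\bigl(e^{itZ}\mid Y=\alpha n\bigr)$ with $t=\tau/(\alpha n)^2$. Writing this conditional cf as the $Z$-partial transform of the slice $\{Y=\alpha n\}$ divided by $f_Y(\alpha n)$, and recovering that slice by Fourier-inverting the $Y$-variable in the joint cf $\bigl(\psi(s,t)\bigr)^n$, I obtain
\[
\hat f_{U_n^2}(\tau)=\frac{1}{2\pi\,f_Y(\alpha n)}\int_{-\infty}^\infty \bigl(\psi(s,t)\bigr)^n e^{-i\alpha n s}\,ds,\qquad t=\frac{\tau}{(\alpha n)^2}.
\]
Because $f_Y(\alpha n)=(\alpha n)^{\alpha n-1}e^{-\alpha n}/\Gamma(\alpha n)$, the prefactor becomes $\Gamma(\alpha n+1)/\bigl(2\pi(\alpha n/e)^{\alpha n}\bigr)$, the constant in (\ref{eq_5.9}). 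The choice $Y=\alpha n$ is what simultaneously forces this normalization and the scaling $t_k=\tau_k/(\alpha n)^2=k\pi/(\alpha^2 n(n-1))$.

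Finally I would substitute into (\ref{eq_5.1}). The shift $U_n^2\mapsto U_n^2-n^{-1}$ multiplies the cf by $e^{-i\tau_k/n}=e^{-i(n-1)^{-1}k\pi}$, exactly the phase in (\ref{eq_5.9}), while $(1-\cos(\tau_k x))/k$ carries over unchanged; taking imaginary parts and collecting $\tfrac2\pi\cdot\tfrac1{2\pi}=\pi^{-2}$ reproduces the stated formula. The main obstacle is analytic rather than algebraic: one must justify pulling the limit $K\to\infty$ outside the $s$-integral. The $s$-integral of the full series need not converge absolutely, so the partial sums in $k$ must be integrated before the limit is taken --- this is precisely why $\lim_{K\to\infty}$ stands outside $\int ds$ in (\ref{eq_5.9}) --- and this interchange, together with the continuity of the slice $y\mapsto f_{Y,Z}(y,\cdot)$ at $y=\alpha n$ required for the Fourier inversion, is where the estimates (\ref{eq_5.2})--(\ref{eq_5.3}) and the hypothesis $n>\max(3,1+1/\alpha)$ do the real work.
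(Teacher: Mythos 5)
Your proposal is correct and takes essentially the same route as the paper: it identifies $(\psi(s,t))^n$ as the joint characteristic function of $(Y,Z)$ via the Laplace transform (3.1), recovers the cf of $U_n^2$ by Fourier inversion in $s$ at the slice $Y=\alpha n$ (using the independence of $U_n^2$ and $Y$, which yields the prefactor $\Gamma(\alpha n+1)/\bigl(2\pi(\alpha n/e)^{\alpha n}\bigr)$ and the scaling $t_k=k\pi/(\alpha^2 n(n-1))$), and substitutes into the Fourier--sine series (5.1). Your handling of the limit also matches the paper's: the theorem as stated keeps $\lim_{K\to\infty}$ outside the $s$-integral (so only finite sums are interchanged with the integral), and the paper's additional Parseval/square-integrability argument is needed only for the separate claim (5.10) that the limit may be taken under the integral.
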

The limit can be taken under the integral at least for
\begin{equation}
      \left[ \frac{n}{2}  \right] > \mathrm{max}  \left(  2, 1 + \frac{1}{2\alpha} ,  \frac{3}{2\alpha}  \right) \, .
       \label{eq_5.10}
\end{equation}
In particular with $\alpha = 1$  and $t_k = k\pi/\big(n (n-1)\big)$ we have at least for $n\ge 6$:
\begin{gather*}
      \!\!\!\!\!\!\!\!\!\!\!\!P \left\{  U_n^2 - n^{-1} \le x  \right\} =  \frac{n!}{\pi^2 \left( n/e \right)^n} \ \cdot \nonumber \\
     \!\!\!\!\!\!\!\!\!\!\!\!\int_{-\infty}^\infty \sum_{k=1}^\infty
      \Im \left\{
      \left(
      \left( \frac{i}{2t_k} \right)^{1/2} D_{-1}\!\!
      \left( \sqrt{i} \frac{1- is}{\sqrt{2t_k}}\right)
      \exp \!  \left( i \left(
      \frac{(1 - is)^2}{8t_k}-t_k -s \right)
      \right)
     \right)^n
       \right\} \, .
\end{gather*}
\begin{equation}
       \frac{1- \cos \bigl( (1-n^{-1})^{-1} k \pi x \bigr)}{k} ds \, . \label{eq_5.11}
\end{equation}
Remark: The condition (\ref{eq_5.10}) is too restrictive but enables the use of Parseval's identity in the following proof. The absolute integrability of the cf  $\hat{f}_{Y,Z}$ is sufficient for the application of the Fourier inversion formula but not necessary.
\begin{proof} From (\ref{eq_3.1}) we obtain the characteristic functions
\begin{gather*}
       \psi (s,t) = E \bigl(  \exp ( is X + it X^2)  \bigr) =
       \left( \frac{i}{2t}\right)^{\alpha/2}
       \exp \left( \frac{i (1-is)^2}{8t}\right) D_{-\alpha}
       \left( \sqrt{i} \frac{1-is}{\sqrt{2t}} \right) \\
        \textrm{and  } \ \hat{f}_{Y,Z} (s,t) =  (\psi (s,t))^n \, .
\end{gather*}
With  $\Re \bigl( i ( 1 - is)^2 (2t)^{-1} \bigr) = st^{-1}$, $t >0$ and $\alpha n > 1$  the absolute convergence of
\begin{displaymath}
         \int_{-\infty}^0  (\psi (s,t))^ne^{-isy} ds
\end{displaymath}
follows from (A.S.19.12.3) and (A.S.13.5.1) and the absolute convergence of
\begin{displaymath}
         \int_0^{\infty}  (\psi (s,t))^ne^{-isy} ds
\end{displaymath}
is obtained  by the asymptotic relation (A.S.19.8.1). \\

\par Since the conditional distribution of $ZY^{-2}| Y=y$ doesn't depend on $y$ we can choose $y = E(Y) = \alpha n$  and obtain the cf of  $U_n^2 -n^{-1}$:
\begin{displaymath}
         \hat{f}_{U_n^2 -n^{-1}} (t) = ( 2 \pi g_{\alpha n} (\alpha n) )^{-1}
         \exp (-itn^{-1}) \int _{-\infty}^\infty  (\psi (s,(\alpha n)^{-2} t))^n
         \exp  (-i \alpha ns) ds \, .
\end{displaymath}
Together with (\ref{eq_5.1}) this entails (\ref{eq_5.9}).\\

\par Now, a sufficient condition for the square integrability of $\hat{f}_{Y,Z}$ is given. The square integrability of the densities $f_U$ and $f_{U^2}$  is equivalent because of
\begin{displaymath}
       \int_{n^{-1}}^1 \left( f_{U^2} (x)\right)^2  dx=
       \int_{n^{-\frac{1}{2}}}^1 (2u)^{-1} \left( f_U (u)\right)^2 du\, .
\end{displaymath}
According to (\ref{eq_4.5}) $f_U$   is square integrable if  $n > \mathrm{max}  \left( 2, 1 +(2 \alpha)^{-1}\right)$.  From the identity of the distributions of $U^2$ and $ZY^{-2} | Y = y$ it follows
\begin{gather*}
      f_{U^2} (w) = \frac{d}{dw} P \left\{ Z \le wy^2 | Y = y \right\} =
      y^2 \frac{f_{Y,Z} (y,wy^2)}{g_{\alpha n} (y)}
      \\ \textrm{with } g_{\alpha n} (y) = (\Gamma (\alpha n))^{-1} \exp (-y) y^{\alpha n-1} \, .
\end{gather*}
With the additional assumption $n> 3/(2 \alpha)$  we obtain
\begin{gather*}
       \int_0^\infty \left(
       \int_{n^{-1} y^2}^{y^2}
       \left( f_{Y,Z} (y,z)  \right)^2 dz
       \right) dy =\\
        \int_0^\infty \left(
        \int_{n^{-1} y^2}^{y^2}
       \left( \left( g_{\alpha n} (y) \right)^{-1} f_{Y,Z} (y,z)\right)^2 dz
       \right)
        \left( g_{\alpha n} (y) \right)^2 dy =\\
        \int_0^\infty \left(
        y^4 \int_{n^{-1}}^1 \left( \left( g_{\alpha n} (y) \right)^{-1}  f_{Y,Z} (y,wy^2)\right)^2 dw\right)
        \left( y^{-1}    g_{\alpha n} (y) \right)^2 dy =\\
        \int_{n^{-1}}^1 \left(   f_{U^2} (w) \right)^2 dw \cdot \int_0^\infty \left( y^{-1} g_{\alpha n} (y) \right)^{2} dy < \infty \, .
\end{gather*}
Thus, due to Parseval's identity, the cf  $\hat{f}_{Y,Z}$ is square integrable and it follows
\begin{displaymath}
      \int_{-\infty}^{\infty} \int_{-\infty}^\infty | \psi (s,t) |^n dsdt < \infty \textrm{ at least for } \left[ \frac{n}{2} \right] > \mathrm{max}  \left( 2,1 + \frac{1}{2\alpha} , \frac{3}{2 \alpha}  \right) \, .
\end{displaymath}
>From (\ref{eq_4.5}) it follows also that the density $f_U$  vanishes at its end-points for these $n$.\\
Then the change of summation and integration in (\ref{eq_5.9}) is justified by
\begin{gather*}
       \sum_{k \ge K} \frac{|\psi (s,t_k)|^n}{k} =
       O \left( \int_{t_K}^\infty | \psi (s,t) |^n dt  \right)\!, \
       K \to \infty \, , \textrm{ and}\\
       \int_{-\infty}^\infty \left( \int_{t_1}^\infty  | \psi (s,t) |^n dt  \right)  ds < \infty \  .
\end{gather*} \ \\[-8ex]
\end{proof}
\newpage
\section{Some $p$-quantiles of $nU_n^2 - 1 $ }
It should be noted that $n$ is the number of squares. In the application for a goodness of fit test the sample size is $k=n-1$. \\
{\scriptsize
\begin{longtable}{p{0.2cm}p{1.2cm}p{1.2cm}p{1.2cm}p{1.2cm}p{1.2cm}p{1.2cm}p{1.2cm}p{1.2cm}p{1.2cm}}

n:  &p=0.005 &p=0.010 &p=0.025 &p=0.050 &p=0.500 &p=0.950 &p=0.975 &p=0.990 &p=0.995
\\ \hline
\\
10 & 0.17789466  &0.20866627   &0.26014429   &0.31111296   &0.72416820   &1.64511506   &1.93318559   &2.33028205   &2.64025028
\\
11  &0.19730682   &0.22863245   &0.28069377   &0.33196840   &0.74212800   &1.64417262   &1.92622396   &2.31678132  & 2.62372056
\\
12  & 0.21512589   &0.24685519  & 0.29933241   &0.35078097   &0.75769056   &1.64103896   &1.91685253   &2.29998228   &2.60265097
\\
13   &0.23155755  &0.26358997   &0.31635339   &0.36787930  &0.77132393   &1.63644125   &1.90598495   &2.28124294   &2.57887125
\\
14   &0.24678209  & 0.27904048   &0.33198974   &0.38352158   &0.78337931   &1.63086853   &1.89422669    &2.26144430    &2.55358516
\\ [3mm]
15    &0.26094814    &0.29337145    &0.34642980    &0.39791268    &0.79412572    &1.62465699    &1.88198346    &2.24116547    &2.52758847
\\
16   &0.27417918   &0.30671949   &0.35982670   &0.41121857   &0.80377307   &1.61804138   &1.86952924   &2.22079055   &2.50140936
\\
17   &0.28657943  & 0.31919848   &0.37230685   &0.42357520   &0.81248776   &1.61118729   &1.85705008   &2.20057436   &2.47539868
\\
18   &0.29823733   &0.33090418   &0.38397570   &0.43509525   &0.82040353   &1.60421247   &1.84467227   &2.18068462  & 2.44978800
\\
19   &0.30922844   &0.34191790   &0.39492200   &0.44587314   &0.82762921   &1.59720104   &1.83248093   &2.16122966   &2.42472760
\\[3mm]
20   &0.31961776   &0.35230915   &0.40522118   &0.45598866   &0.83425434   &1.59021315   &1.82053247   &2.14227701   &2.40031181
\\
21   &0.32946153   &0.36213779   &0.41493779   &0.46550982   &0.84035330   &1.58329171   &1.80886317   &2.12386603   &2.37659630
\\
22   &0.33880877   &0.37145573   &0.42412748   &0.47449503   &0.84598846   &1.57646707   &1.79749507   &2.10601650   &2.35360984
\\
23   &0.34770234   &0.38030822   &0.43283857  & 0.48299478   &0.85121248   &1.56976041   &1.78644011   &2.08873466   &2.33136257
\\
24   &0.35618001   &0.38873496   &0.44111322   &0.49105303   &0.85607024   &1.56318610   &1.77570311   &2.07201741   &2.30985179
\\ [3mm]
25   &0.36427515   &0.39677092   &0.44898846   &0.49870821   &0.86060015   &1.55675354   &1.76528382   &2.05585528   &2.28906598
\\
26   &0.37201740   &0.40444708   &0.45649693   &0.50599415   &0.86483536   &1.55046837   &1.75517847   &2.04023458   &2.26898774
\\
27   &0.37943320   &0.41179099   &0.46366760   &0.51294074   &0.86880461   &1.54433350   &1.74538083   &2.02513884   &2.24959586
\\
28   &0.38654622   &0.41882726   &0.47052622   &0.51957452   &0.87253294   &1.53834976   &1.73588302   &2.01054993   &2.23086677
\\
29   &0.39337772   &0.42557795   &0.47709583   &0.52591915   &0.87604231   &1.53251648   &1.72667611   &1.99644881   &2.21277557
\\ [3mm]
30   &0.39994689   &0.43206291   &0.48339710   &0.53199583   &0.87935202   &1.52683189   &1.71775048   &1.98281610   &2.19529685
\\
31   &0.40627105   &0.43830006   &0.48944863   &0.53782356   &0.88247913   &1.52129341   &1.70909620   &1.96963242   &2.17840515
\\
32   &0.41236596   &0.44430562   &0.49526722   &0.54341952   &0.88543880   &1.51589788   &1.70070319   &1.95687868   &2.16207537
\\
33   &0.41824592   &0.45009432   &0.50086810   &0.54879921   &0.88824450   &1.51064174   &1.69256144   &1.94453628   &2.14628301
\\
34   &0.42392401   &0.45567960   &0.50626510   &0.55397668   &0.89090828   &1.50552119   &1.68466105   &1.93258721   &2.13100433
\\ [3mm]
35   &0.42941215  &0.46107370   &0.51147084   &0.55896474   &0.89344094   &1.50053226   &1.67699239   &1.92101411   &2.11621649
\\
36   &0.43472131  & 0.46628784  &0.51649683   &0.56377506   &0.89585221   &1.49567088   &1.66954609   &1.90980037   &2.10189756
\\
37   &0.43986154   &0.47133233   &0.52135362   &0.56841830   &0.89815084   &1.49093298   &1.66231313   &1.89893010   &2.08802660
\\
38   &0.44484211   &0.47621662   &0.52605089   &0.57290423   &0.90034476   &1.48631451   &1.65528479   &1.88838815   &2.07458363
\\
39   &0.44967158   &0.48094944   &0.53059755   &0.57724183   &0.90244115   &1.48181147   &1.64845273   &1.87816010   &2.06154966
\\ [3mm]
40   &0.45435784   &0.48553886   &0.53500180   &0.58143937   &0.90444655   &1.47741992   &1.64180898   &1.86823224   &2.04890661
\\
41   &0.45890822   &0.48999231   &0.53927124   &0.58550446   &0.90636692   &1.47313604   &1.63534588   &1.85859156   &2.03663735
\\
42   &0.46332950   &0.49431672   &0.54341286   &0.58944415  &0.90820768   &1.46895611   &1.62905616   &1.84922570   &2.02472560
\\
43   &0.46762802   &0.49851849   &0.54743319   &0.59326496   &0.90997382   &1.46487652   &1.62293287   &1.84012296   &2.01315594
\\
44   &0.47180964  &0.50260359   &0.55133827   &0.59697294   &0.91166990   &1.46089377   &1.61696938   &1.83127222   &2.00191375
\\ [3mm]
45   &0.47587985   &0.50657758   &0.55513370   &0.60057371  &0.91330010   &1.45700451   &1.61115938   &1.82266295   &1.99098517
\\
46   &0.47984376   &0.51044565   &0.55882473   &0.60407248   &0.91486830   &1.45320550   &1.60549687   &1.81428516   &1.98035706
\\
47   &0.48370616   &0.51421263   &0.56241624   &0.60747414   &0.91637805   &1.44949360   &1.59997614   &1.80612938   &1.97001698
\\
48   &0.48747154   &0.51788306   &0.56591280   &0.61078322   &0.91783264   &1.44586584   &1.59459172   &1.79818663   &1.95995310
\\
49   &0.49114409   &0.52146118   &0.56931867   &0.61400398   &0.91923511   &1.44231932   &1.58933845   &1.79044838   &1.95015426
\\ [3mm]
50   &0.49472776   &0.52495098   &0.57263786   &0.61714039   &0.92058828   &1.43885128   &1.58421139   &1.78290656   &1.94060981
\\
51   &0.49822626   &0.52835619   &0.57587411   &0.62019618   &0.92189476   &1.43545906   &1.57920584   &1.77555347   &1.93130970
\\
52   &0.50164308   &0.53168034  & 0.57903095   &0.62317484   &0.92315700   &1.43214014   &1.57431733   &1.76838184   &1.92224435
\\
53   &0.50498151   &0.53492672   &0.58211170   &0.62607966   &0.92437724   &1.42889205   &1.56954159   &1.76138474   &1.91340469
\\
54   &0.50824465  &0.53809847   &0.58511947   &0.62891372   &0.92555761   &1.42571247   &1.56487456   &1.75455558   &1.90478209
\\ \newpage
n:  &p=0.005 &p=0.010 &p=0.025 &p=0.050 &p=0.500 &p=0.950 &p=0.975 &p=0.990 &p=0.995
\\ \hline
\\
55   &0.51143542   &0.54119852   &0.58805721   &0.63167993   &0.92670007   &1.42259915   &1.56031236   &1.74788810   &1.89636834
\\
56   &0.51455659   &0.54422965   &0.59092769   &0.63438104   &0.92780645   &1.41954994   &1.55585131   &1.74137635   &1.88815565
\\
57  &0.51761077  &0.54719450   &0.59373352   &0.63701962   &0.92887849   &1.41656277   &1.55148788   &1.73501467   &1.88013661
\\
58   &0.52060045   &0.55009554   &0.59647719   &0.63959812   &0.92991779   &1.41363566  &1.54721870   &1.72879765   &1.87230415
\\
59   &0.52352797   &0.55293513   &0.59916103   &0.64211884   &0.93092586   &1.41076673   &1.54304057   &1.72272016   &1.86465156
\\ [3mm]
60   &0.52639556   &0.55571550   &0.60178727   &0.64458398   &0.93190412   &1.40795414   &1.53895042   &1.71677730   &1.85717243
\\
61   &0.52920533   &0.55843878   &0.60435801   &0.64699559   &0.93285390   &1.40519615   &1.53494533   &1.71096439   &1.84986065
\\
62  & 0.53195929   &0.56110696  &0.60687524   &0.64935565   &0.93377645   &1.40249108   &1.53102248   &1.70527699   &1.84271041
\\
63   &0.53465935   &0.56372196   &0.60934085   &0.65166600   &0.93467296   &1.39983733   &1.52717920   &1.69971082   &1.83571616
\\
64   &0.53730733   &0.56628561   &0.61175665   &0.65392842   &0.93554453   &1.39723335   &1.52341293   &1.69426184   &1.82887259
\\[3mm]
65   &0.53990496   &0.56879962   &0.61412434   &0.65614458   &0.93639222   &1.39467765   &1.51972122   &1.68892617   &1.82217464
\\
66   &0.54245388   &0.57126564   &0.61644556   &0.65831607   &0.93721701   &1.39216880   &1.51610172   &1.68370008   &1.81561748
\\
67   &0.54495567   &0.57368524   &0.61872186   &0.66044443   &0.93801984   &1.38970545   &1.51255218   &1.67858004   &1.80919648
\\
68   &0.54741183   &0.57605993   &0.62095471   &0.66253108   &0.93880159   &1.38728626   &1.50907044   &1.67356265   &1.80290723
\\
69  &0.54982380   &0.57839112   &0.62314553   &0.66457742   &0.93956310   &1.38490998   &1.50565444   &1.66864466   &1.79674550
\\ [3mm]
70   &0.55219293   &0.58068019   &0.62529566   &0.66658476   &0.94030516   &1.38257538   &1.50230220   &1.66382296   &1.79070723
\\
71  & 0.55452055  & 0.58292844   &0.62740638   &0.66855434   &0.94102852   &1.38028129   &1.49901181   &1.65909457   &1.78478855
\\
72   &0.55680790   &0.58513712   &0.62947893   &0.67048737   &0.94173389   &1.37802659   &1.49578145   &1.65445663   &1.77898576
\\
73   &0.55905619   &0.58730741   &0.63151447   &0.67238500   &0.94242196   &1.37581019   &1.49260938   &1.64990641   &1.77329528
\\
74   &0.56126655   &0.58944047   &0.63351413   &0.67424831   &0.94309335   &1.37363103   &1.48949389   &1.64544127   &1.76771370
\\ [3mm]
75   &0.56344010   &0.59153738   &0.63547899   &0.67607835   &0.94374869   &1.37148812   &1.48643338   &1.64105870   &1.76223775
\\
76   &0.56557788   &0.59359919   &0.63741007   &0.67787613   &0.94438855   &1.36938049   &1.48342629   &1.63675627   &1.75686429
\\
77   &0.56768091   &0.59562692   &0.63930836   &0.67964260   &0.94501349   &1.36730719   &1.48047113   &1.63253166   &1.75159031
\\
78   &0.56975017   &0.59762152   &0.64117481   &0.68137869   &0.94562403   &1.36526732   &1.47756645   &1.62838263   &1.74641289
\\
79   &0.57178658   &0.59958393   &0.64301032   &0.68308527   &0.94622067   &1.36326002   &1.47471087   &1.62430704   &1.74132927
\\ [3mm]
80  & 0.57379104   &0.60151503   &0.64481577   &0.68476320   &0.94680390   &1.36128444   &1.47190306   &1.62030282   &1.73633675
\\
81   &0.57576441   &0.60341568   &0.64659200   &0.68641328   &0.94737416   &1.35933977   &1.46914173   &1.61636798   &1.73143278
\\
82   &0.57770753   &0.60528669   &0.64833980   &0.68803629   &0.94793190   &1.35742523   &1.46642564   &1.61250061   &1.72661487
\\
83   &0.57962119   &0.60712887   &0.65005995  &0.68963298   &0.94847753   &1.35554007   &1.46375360   &1.60869887   &1.72188063
\\
84   &0.58150615   &0.60894297   &0.65175320   &0.69120408   &0.94901145   &1.35368355   &1.46112447   &1.60496099   &1.71722779
\\ [3mm]
85   &0.58336317   &0.61072973   &0.65342025   &0.69275028   &0.94953403   &1.35185497   &1.45853712   &1.60128526   &1.71265412
\\
86   &0.58519295   &0.61248984   &0.65506180   &0.69427223   &0.95004564   &1.35005365   &1.45599050   &1.59767002   &1.70815751
\\
87   &0.58699617   &0.61422400   &0.65667851   &0.69577058   &0.95054664   &1.34827892   &1.45348357   &1.59411371   &1.70373589
\\
88   &0.58877351   &0.61593285   &0.65827102   &0.69724596   &0.95103734   &1.34653015   &1.45101532   &1.59061477   &1.69938729
\\
89   &0.59052559   &0.61761702   &0.65983995   &0.69869894   &0.95151808   &1.34480673   &1.44858480   &1.58717175   &1.69510981
\\ [3mm]
90   &0.59225304   &0.61927713   &0.66138588  &0.70013012   &0.95198916   &1.34310806   &1.44619108   &1.58378320   &1.69090160
\\
91   &0.59395645   &0.62091377   &0.66290939   &0.70154003   &0.95245086   &1.34143355   &1.44383326   &1.58044777   &1.68676089
\\
92   &0.59563640   &0.62252750   &0.66441103   &0.70292921   &0.95290349   &1.33978265   &1.44151047   &1.57716411   &1.68268597
\\
93   &0.59729344   &0.62411887   &0.66589134   &0.70429818   &0.95334729   &1.33815482   &1.43922186   &1.57393094   &1.67867517
\\
94   &0.59892811   &0.62568842   &0.66735083   &0.70564743   &0.95378254   &1.33654953   &1.43696663   &1.57074703   &1.67472691
\\ [3mm]
95   &0.60054092   &0.62723664   &0.66879000   &0.70697744   &0.95420949   &1.33496628   &1.43474398   &1.56761117   &1.67083964
\\
96   &0.60213239   &0.62876405   &0.67020932   &0.70828867   &0.95462836   &1.33340457   &1.43255316   &1.56452220   &1.66701186
\\
97   &0.60370298   &0.63027111   &0.67160927   &0.70958158   &0.95503940   &1.33186393   &1.43039344   &1.56147901   &1.66324213
\\
98   &0.60525319   &0.63175829   &0.67299029   &0.71085658   &0.95544282   &1.33034389   &1.42826408   &1.55848051   &1.65952906
\\
99   &0.60678345   &0.63322604   &0.67435281   &0.71211411   &0.95583885   &1.32884400   &1.42616442   &1.55552565   &1.65587129
\\ [3mm]
100   &0.60829421   &0.63467480   &0.67569726   &0.71335456   &0.95622767   &1.32736383   &1.42409376   &1.55261341   &1.65226752
\end{longtable}
}
\section*{References}
\begin{description}
\item[] Abramowitz, M. and Stegun, I. (1968)  \emph{Handbbook of Mathematical Functions}, Dover Publicatons Inc., New York.
\item[] Burrows, P.M. (1979) Selected percentage points of Greenwood's statistic, \emph{Journal of the Royal Statistical Society, Series A}, \textbf{142}, 256-258.
\item[] Currie, I.D. (1981) Further percentage points of Greenwood's statistic, \emph{Journal of the Royal Statistical Society, Series A}, \textbf{144}, 360-363.
\item[] Currie, I.D. and Stephens, M.A. (1986) Relations between statistics for tes\-ting exponentiality and uniformity, \emph{The Canadian Journal of Statistics}, \textbf{14}, 177-180.
\item[] Darling, D.A. (1953) On a class of problems related to the random division of an interval, \emph{Annals of Mathematical Statistics} \textbf{24}, 239-253.
\item[] Does, R.J.M.M. et al. (1988) Approximating the distribution of Greenwood's statistic, \emph{Statistica Neerlandica} \textbf{42}, 153-161.
\item[] Feller, W. (1971) \emph{An Introduction to Probability Theory and Its Applications}, Vol.II, 2nd ed., John Wiley \& Sons, New York.
\item[] Gardner, A. (1952) Greenwood's ``Problem of intervals'': an exact solution for $n = 3$, \emph{Journal of the Royal Statistical Society, Series B}, \textbf{14}, 135-139.
\item[] Ghosh, K. and Jammalamadaka, S.R. (1998) Small sample approximation for spacing statistics, \emph{Journal of  Statistical Planning and Inference} \textbf{69}, 245-261.
\item[] Ghosh, K. and Jammalamadaka, S.R. (2000) Some recent results on inferences based on spacings, in:  Puri, M.L. (Ed.) Asymptotics in Statistics and Probability: Papers in Honor of George Gregory Roussas, VSP
Utrecht, 185-196.
\item[] Hill, I.D. (1979) Approximating the distribution of Greenwood's statistic with Johnson distributions, \emph{Journal of the Royal Statistical Society, Series A}, \textbf{142}, 378-380.
\item[] Kotz, S. et al. (1994)  Continuous Univariate Distributions, Vol.1, 2nd ed., chap. 17, John Wiley \& Sons, Inc., New York.
\item[] Laha, R.G. (1954) On a characterization of the gamma distribution, \emph{Annals of Mathematical Statistics} \textbf{25},
784-787.
\item[] Lukacs, E. (1955) A characterization of the gamma distribution, \emph{Annals of Mathematical Statistics} \textbf{26}, 319-324.
\item[] Moran, P.A.P. (1947) The random division of an interval, Journal of the Royal Statistical Society, Series B, \textbf{9}, 92-98 (Corrigendum (1981) \emph{Journal of the Royal Statistical Society, Series A}, \textbf{144}, 388).
\item[] Moran, P.A.P: (1951) The random division of an interval II, \emph{Journal of the Royal Statistical Society, Series B},  \textbf{13}, 147-150.
\item[] Provost, S.B. (1988) The exact density of a statistic related to the shape parameter of a gamma random variate, \emph{Metrika} \textbf{35}, 191-196.
\item[] Pyke, R. (1965) Spacings (With Discussion), \emph{Journal of the Royal Statistical Society, Series B}, \textbf{27}, 395-449.
\item[] Royen, T. (2007a) Exact distribution of the sample variance from a gamma parent distribution, arxiv:0704.1415 [math.ST].
\item[] Royen, T. (2007b) On the Laplace transform of some quadratic forms and the exact distribution of the sample variance from a gamma or uniform parent distribution, arxiv.0710.5749 [math.ST].
\item[] Stephens, M.A. (1981) Further percentage points for Greenwood's statistic, \emph{Journal of the Royal Statistical  Society, Series A}, \textbf{144}, 364-366.
\item[] Subhash, C.K. and Gupta, R.P. (1988) A Monte Carlo study of some asymptotic optimal tests of exponentiality against positive aging, \emph{Communications in Statistics - Simulation and Computation} \textbf{17}, 803-811.
\end{description}

 \end{document}